\theoremstyle{plain}
\newtheorem{thm}{Theorem}
\newtheorem{lem}{Lemma}
\newtheorem{rem}{Remark}
\newtheorem{prop}{Proposition}
\numberwithin{equation}{section}
\newcommand{\dmn}{\mathop{\rm dom}}
\renewcommand{\kappa}{\varkappa}
\newcommand{\Real}{\mathbb R}
\newcommand{\Comp}{\mathbb C}
\newcommand{\eps}{\varepsilon}
\newcommand{\cI}{\mathcal{I}}
\newcommand{\cP}{\mathcal{P}}
\newcommand{\cH}{\mathcal{H}}
\newcommand{\cD}{\mathcal{D}}
\newcommand{\cQ}{\mathcal{Q}}
\newcommand{\cF}{\mathcal{F}_Q}
\newcommand\xe{\left(\tfrac x\eps\right)}
\newcommand\xep{(\eps^{-1}\cdot)}
\begin{document}

\title[Schr\"{o}dinger operators with Coulomb-like potentials]
{1-D Schr\"{o}dinger operators with Coulomb-like  potentials}

\author{Yuriy Golovaty}%
\address{Department of Mechanics and Mathematics,
  Ivan Franko National University of Lviv\\
  1 Universytetska str., 79000 Lviv, Ukraine}
\curraddr{}
\email{yuriy.golovaty@lnu.edu.ua}

\subjclass[2000]{Primary 34L40, 34B09; Secondary  81Q10}

\begin{abstract}
We study the  convergence  of 1D Schr\"odinger ope\-rators $H_\eps$ with the potentials which are  regularizations of a class of pseudo-potentials having in particular the form
$$
\alpha \delta'(x)+\beta \delta(x)+\gamma/|x|\quad\text{or}\quad
\alpha \delta'(x)+\beta \delta(x)+\gamma/x.
$$
The limit behaviour  of $H_\eps$  in the norm resolvent topology, as $\eps\to 0$,
essentially  depends on a way of regularization of the Coulomb potential and the existence of zero-energy resonances for $\delta'$-like potential. All possible limits are described in terms of point interactions at the origin.
As a consequence of the convergence results,
different kinds of $L^\infty(\mathbb{R})$-approximations to the even and odd Coulomb potentials, both penetrable and impenetrable in the limit, are constructed.
\end{abstract}

\keywords{1D Schr\"{o}dinger operator,  Coulomb potential, one-dimensional hydrogen atom, $\delta'$-potential,
scattering problem, penetrability of potential,  point interaction}
\maketitle


\section{Introduction and main results}\label{SecIntro}

One-dimensional Schr\"{o}dinger operators with the Coulomb potentials, the structure of  their spectra and the question of  penetrability of the Coulomb potentials have been the subject of several mathematical discussions \cite{Moshinsky:1993, Newton:1994, Moshinsky:1994}, \cite{FischerLeschkeMuller:1995, Kurasov:1996, FischerLeschkeMuller:1997, Kurasov:1997}, starting with the work of Loudon~\cite{Loudon:1959}. These studies are related to the one-dimensional models of the hydrogen atom
\begin{equation}\label{pseudoCoulomb}
    -\frac{ d^2\psi}{dx^2}-\frac{\gamma}{|x|}\,\psi=E\psi,
    \qquad
    -\frac{ d^2\psi}{dx^2}+\frac{\gamma}{x}\,\psi=E\psi, \quad x\in\Real.
  \end{equation}
Since the potentials have singularities at the origin,  the first derivative of wave function $\psi$  also has in general singularities as $x\to 0$, and therefore the wave function should be subject to some additional conditions at $x=0$. For these formal differential expressions, mathematics gives a large enough set of  the boundary conditions associated with self-adjoint operators in $L^2(\Real)$ \cite{FischerLeschkeMuller:1995, deOliveiraVerri:2009, BodenstorferDijksmaLanger:2000}.
The main  issue here is  a  physically motivated choice of such conditions. We noticed that this problem   has many common features with the problem of  $\delta'$-potential \cite{GolovatyHrynivJPA:2010, Golovaty:2012, GolovatyHrynivProcEdinburgh2013, GolovatyIEOT2013, Zolotaryuk08, Zolotaryuk09, GolovatyJPA:2018, GolovatyIEOT:2018}. First of all, both the Coulomb potential and the $\delta'$-potential  are very sensitive to a way of their regularization. From a physical point of view, this means that there is no unique one-dimensional model of the hydrogen atom described by  the pseudo-Hamiltonians in \eqref{pseudoCoulomb}. However there are many different quantum systems with the Coulomb-like potentials that exhibit different physical properties.


We study  the norm resolvent convergence of Hamiltonians with the Coulomb-like potentials perturbed by localized singular potentials. Assume that real-valued function $Q$  is locally integrable outside the origin and  has an interior singularity at $x=0$, namely
\begin{equation}\label{QnearOrigin}
Q(x)=
  \begin{cases}
    \frac{q_-}{x}, & \text{if \ } -a<x<0,\\
    \frac{q_+}{x}, & \text{if \ } \kern12pt 0<x<a
  \end{cases}
\end{equation}
for some real constants $q_-$, $q_+$ and $a>0$.
We also suppose that $Q$ is bounded from below if $|x|>a$.
Set
\begin{equation}\label{Qeps}
Q_\eps(x)=
  \begin{cases}
   \phantom{\frac{\ln\eps}{\eps}} Q(x), & \text{if \ } |x|>\eps,\\
   \frac{\ln\eps}{\eps}\,\kappa\left(\frac{x}{\eps}\right), & \text{if \ } |x|<\eps,
  \end{cases}
\end{equation}
where $\kappa$ is a function belonging to $L^\infty(-1,1)$.
Also let $U$ and $V$  be real-valued, measurable and  bounded  functions with compact supports.
In additional, we suppose that their supports are contained in  interval $\cI=(-1,1)$.
We study the  convergence of   Schr\"{o}dinger operators
\begin{equation}\label{Heps}
    H_\eps= -\frac{d^2}{dx^2}+Q_\eps(x)
    +\frac{1}{\eps^2}\,U\left(\frac{x}{\eps}\right)
    +\frac{1}{\eps}\,V\left(\frac{x}{\eps}\right),
\end{equation}
as the positive parameter $\eps$ tends to zero. We hereafter interpret $\eps^{-2}U(\eps^{-1}\,\cdot)$ and $\eps^{-1}V(\eps^{-1}\,\cdot)$ as $\delta'$-like and $\delta$-like potentials respectively, because
\begin{equation*}
\eps^{-2}U(\eps^{-1}x) \to \alpha \delta'(x), \qquad \eps^{-1}V(\eps^{-1}x) \to \beta \delta(x)
\end{equation*}
in the sense of distributions as $\eps\to 0$, provided $U$ is a function of  zero-mean.  In general, the potentials of $H_\eps$ diverge, because we do not assume that $\int_{\Real} U\,dx=0$.

Before stating our main result we introduce some notation.
We say that the Schr\"odinger operator~$-\frac{d^2}{d t^2}+ U$  possesses  a \emph{zero-energy resonance} if there exists a non-trivial solution~$h$ of the equation $-h'' + Uh= 0$
that is bounded on the whole line. We call $h$ the \emph{half-bound state}. We will also simply say that the potential $U$ is \emph{resonant} and  it possesses a half-bound state $h$.
We set
\begin{equation}\label{Theta}
  \theta=\frac{h(+\infty)}{h(-\infty)},
\end{equation}
where $h(\pm\infty)=\lim\limits_{x\to\pm\infty}h(x)$.
These limits exist, because the half-bound state is constant outside the support of $U$ as a bounded solution of equation $h''=0$. Moreover, both the values $h(\pm\infty)$ are different from zero. Since a half-bound state is defined up to a scalar factor, we fix half-bound state $h_0$ so that
\begin{equation}\label{HalphaCnds}
  h_0(-\infty)=1, \qquad h_0(+\infty)=\theta.
\end{equation}
Let us set
\begin{equation}\label{Mu}
  \mu=\int_{\cI}V h_0^2\,dx.
\end{equation}
We also introduce the spaces
\begin{equation*}
\cQ_\pm=\left\{\psi\in L^2(\Real_\pm)\colon \psi,\, \psi'\in AC_{loc}(\Real_\pm),\: -\psi''+Q\psi\in L^2(\Real_\pm) \right\}
\end{equation*}
and denote by $\cQ$ the space of $L^2(\Real)$-functions $\phi$ such that $\phi|_{\Real_\pm}\in \cQ_\pm$.
Here $AC_{loc}(\Real_\pm)$ denotes the set of functions $\psi$ on $\Real_\pm$ which are absolutely continuous on every compact subset of $\Real_\pm$.
Note that the first derivative of $\phi\in \cQ$ is in general undefined at the origin and  has a logarithmic singularity at this point \cite{FischerLeschkeMuller:1995, Moshinsky:1993, Kurasov:1996}.

We say  self-adjoint operators $H_\eps$  converge as $\eps\to0$  in the norm resolvent sense if the resolvents $(H_\eps-\zeta)^{-1}$ converge in the uniform ope\-ra\-tor topology for all $\zeta\in\Comp\setminus\Real$.

Our main result reads as follows.

\begin{thm}\label{MainTheorem}
The operator family $H_\eps$ given by \eqref{Heps} converges as $\eps\to0$  in the norm resolvent sense.
If potential $U$ has a zero-energy resonance, the corresponding  half-bound state $h_0$ satisfies \eqref{HalphaCnds} and
 \begin{equation}\label{ThetaVcond}
 \theta^2 q_+-q_-=\int_{\cI}\kappa h_0^2\,dx,
\end{equation}
then  $H_\eps$ converge to  operator $\cH$ that is defined by $\cH\phi=-\phi''+Q\phi$
on functions $\phi$ in~$\cQ$, subject to  the coupling conditions
\begin{equation}\label{ResonantConds}
\begin{gathered}
   \phi(+0)=\theta \phi(-0), \\
  \lim_{x\to +0}\big(\theta \phi'(x)-\phi'(-x)-(\theta^2 q_+-q_-)\phi(-0)\ln x\big)=\mu \phi(-0).
\end{gathered}
\end{equation}
Otherwise, that is,  if either \eqref{ThetaVcond} does not hold or else $U$ is not resonant,   operators $H_\eps$ converge to the direct sum $\cH=\cD_-\oplus \cD_+$ of the Dirichlet half-line Schr\"odinger operators
$\cD_\pm=-\frac{d^2}{dx^2}+Q$ with domains  $\dmn \cD_\pm=\{\psi\in \cQ_\pm\colon \psi(0)=0 \}$.

Moreover, in both the cases we have
\begin{equation}\label{ResolventDiff}
  \|(H_\eps-\zeta)^{-1}-(\cH-\zeta)^{-1}\|\leq C\eps^{1/4}.
\end{equation}
\end{thm}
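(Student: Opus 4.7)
My plan is to establish the convergence via an approximate-resolvent construction based on matched asymptotic expansions. For fixed $\zeta\in\Comp\setminus\Real$ and $f\in L^2(\Real)$, set $\psi=(\cH-\zeta)^{-1}f$ and construct $\tilde\psi_\eps$ satisfying $\|(H_\eps-\zeta)\tilde\psi_\eps-f\|_{L^2}\leq C\eps^{1/4}\|f\|_{L^2}$. Combined with a uniform bound on $\|(H_\eps-\zeta)^{-1}\|$ (obtained from a coercivity estimate for $H_\eps+\lambda$ with $\lambda$ large, using that $\eps^{-1}V\xep$ and $\eps^{-2}U\xep$ are relatively form-bounded uniformly in $\eps$), this forces $\|(H_\eps-\zeta)^{-1}f-\psi\|\leq C\eps^{1/4}\|f\|$, yielding both the norm resolvent convergence and the bound \eqref{ResolventDiff}.

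The ansatz is piecewise: on $\{|x|>\eps\}$ take $\tilde\psi_\eps=\psi$; on the inner zone $\{|x|<\eps\}$ stretch $t=x/\eps$ and set
\begin{equation*}
\tilde\psi_\eps(x)=A\,h_0\xe+\eps\,W_1\xe+\eps\ln\eps\,W_2\xe,
\end{equation*}
where $A$ is a scalar to be fixed by matching. Multiplying $(H_\eps-\zeta)\tilde\psi_\eps=f$ by $\eps^2$ and matching powers of $\eps$ and $\eps\ln\eps$, the correctors satisfy $-W_1''+UW_1=-AVh_0$ and $-W_2''+UW_2=-A\kappa h_0$ on $\Real$. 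Because $U$ is resonant, $h_0$ spans the bounded nullspace; pairing it with a second, linearly growing solution $\tilde h_0$ of $-h''+Uh=0$ and applying variation of parameters produces explicit $W_j$ that are linear at $\pm\infty$ with slopes proportional to the integrals $\int_\cI Vh_0^2\,dt$ and $\int_\cI \kappa h_0^2\,dt$, weighted by $\theta$ on the right side.

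Matching at $x=\pm\eps$ is the heart of the argument. Any $\psi\in\dmn\cH$ admits the Coulomb-driven expansion $\psi(x)=\psi(\pm 0)+c_\pm x+q_\pm\psi(\pm 0)\,x\ln|x|+O(x^2|\ln x|)$ near $0$. Demanding continuity of $\tilde\psi_\eps$ at the leading scale yields $A=\psi(-0)$ and $\psi(+0)=\theta\psi(-0)$, the first line of \eqref{ResonantConds}. Matching the derivative at the $\eps\ln\eps$ scale requires the slopes of $W_2$ at $\pm\infty$ to be $\theta q_+\psi(-0)$ and $q_-\psi(-0)$ respectively; via variation of parameters this is precisely the compatibility condition \eqref{ThetaVcond}. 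Matching the remaining $\eps$-scale then identifies the finite part of the derivative jump with $\mu=\int_\cI Vh_0^2\,dx$, reproducing the second line of \eqref{ResonantConds}.

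The residual $r_\eps=(H_\eps-\zeta)\tilde\psi_\eps-f$ is supported near $x=\pm\eps$ and consists of interface mismatches plus an $O(\eps\ln\eps)$ remainder from the truncated inner expansion. Regularizing the interface with a cutoff on a layer of width $\eps$ and using an $L^\infty$-to-$L^2$ conversion on intervals of length $\eps$ gives the rate $\eps^{1/4}$. When either $U$ is non-resonant or \eqref{ThetaVcond} fails, no bounded inner solution with the required matching slopes exists unless $A=0$; this forces $\psi(\pm 0)=0$ and yields the Dirichlet decoupling $\cH=\cD_-\oplus\cD_+$. I expect the principal obstacle to be the simultaneous bookkeeping of the three competing asymptotic scales $1$, $\eps$, $\eps\ln\eps$ alongside the Coulomb logarithmic tails; the identity \eqref{ThetaVcond} is exactly what cancels the worst log-contributions and keeps the construction bounded.
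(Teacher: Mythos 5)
Your strategy is essentially the paper's: outer solution $u=(\cH-\zeta)^{-1}f$, an inner expansion in the stretched variable whose correctors solve $-W''+UW=-A\kappa h_0$ and $-W''+UW=-AVh_0$, solvability/matching conditions that produce \eqref{ThetaVcond} and \eqref{ResonantConds}, an interface corrector, and a residual estimate. But as written the construction does not give a \emph{norm} resolvent estimate. The central gap is that your inner ansatz contains no term responding to $f$: on $\{|x|<\eps\}$ the residual $(H_\eps-\zeta)\tilde\psi_\eps-f$ then retains the uncancelled term $-f\chi_{(-\eps,\eps)}$, whose $L^2$-norm is not $o(1)\|f\|$ uniformly over the unit ball (take $f$ concentrated in $(-\eps,\eps)$); this only yields strong resolvent convergence, not \eqref{ResolventDiff}. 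The paper removes this obstruction by adding $\eps^2 z_\eps(x/\eps)$ with $z_\eps$ solving the Cauchy problem \eqref{CPZeps}, together with the bound $\|z_\eps\|_{W_2^2(\cI)}\le C\eps^{-1/2}\|f\|$. Relatedly, your source of the uniform bound on $\|(H_\eps-\zeta)^{-1}\|$ is wrong: $\eps^{-2}U(\eps^{-1}\cdot)$ is \emph{not} uniformly form-bounded relative to $-d^2/dx^2$ (the family $H_\eps$ is not uniformly semibounded; uniform form-boundedness would preclude the resonance dichotomy). Luckily nothing of the kind is needed, since self-adjointness already gives $\|(H_\eps-\zeta)^{-1}\|\le|\Im\zeta|^{-1}$.

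Two further points would make your error analysis fail quantitatively. First, repairing the interface mismatches ``with a cutoff on a layer of width $\eps$'' is too thin: the jumps are $[w_\eps]_{\pm\eps}=O(\eps|\ln\eps|)\|f\|$ and $[w'_\eps]_{\pm\eps}=O(\eps^{1/2})\|f\|$, and a corrector supported on an $\eps$-layer has second derivative of order $[w_\eps]\eps^{-2}$, giving an $L^2$ contribution of order $\eps^{-1/2}|\ln\eps|\,\|f\|$ (and already $O(\|f\|)$ from the derivative jump alone). The paper instead uses cut-offs of \emph{fixed} width inside $[0,\tfrac12 a]$; then the only delicate term is $\|Q\rho_\eps\|$, because $Q\sim q_\pm/x$ is not square integrable at the origin, and splitting that integral at $|x|=\eps^{1/4}$ is precisely where the rate $\eps^{1/4}$ comes from --- your proposal does not identify this mechanism. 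Second, for uniformity in $f$ your matching must rest on quantitative near-origin asymptotics of $\psi=(\cH-\zeta)^{-1}f$, namely $|\psi(x)-\psi(\pm0)|\le C\|f\|\,|x\ln|x||$, $|\psi'(x)-q_\pm\psi(\pm0)\ln|x|-b_\pm(\psi)|\le C\|f\|\,|x|^{1/2}$ and $|b_\pm(\psi)|\le C\|f\|$, with constants independent of $f$; the paper proves these by a Gronwall argument, while your expansion with unspecified constants $c_\pm$ is only qualitative. The formal matching itself (including the non-resonant/failed-\eqref{ThetaVcond} case forcing $A=0$ and the Dirichlet decoupling) is correct and coincides with the paper's.
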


\smallskip

\begin{rem}\rm
If a half-bound state $h$ is not normalized to unity at $x=-\infty$ as in \eqref{HalphaCnds},  then \eqref{Mu} and \eqref{ThetaVcond} transform to read
\begin{equation}\label{MuH}
\mu=\frac{1}{|h(-\infty)|^2}\int_{\cI}V h^2\,dx, \qquad
 \theta^2 q_+-q_-=\frac{1}{|h(-\infty)|^2}\int_{\cI}\kappa h^2\,dx.
\end{equation}
\end{rem}

\begin{rem}\label{RemarkOnConds}
\rm
Take note that  point interactions \eqref{ResonantConds}  involve implicitly the regu\-la\-ri\-zing function $\kappa$ via condition \eqref{ThetaVcond}, which describes a certain interaction of the $\delta'$-like  and the Coulomb-like potentials.

If we introduce notation $b_\pm(\phi)=\lim_{x\to \pm 0}\big(\phi'(x)-q_\pm \phi(\pm0)\ln|x|\big)$, then \eqref{ResonantConds} can be written in the form
\begin{equation}\label{ResonantCondsBpm}
  \phi(+0)=\theta \phi(-0), \qquad
  \theta b_+(\phi)-b_-(\phi)=\mu \phi(-0).
\end{equation}
Taking into account the jump condition for $\phi$, we see that
\begin{multline*}
  \theta b_+(\phi)-b_-(\phi)=\theta\lim_{x\to +0}
  \big(\phi'(x)-q_+ \phi(+0)\ln|x|\big)-
  \lim_{x\to -0}
  \big(\phi'(x)-q_- \phi(-0)\ln|x|\big)\\
  =
  \lim_{x\to +0}
  \Big(
  \theta \phi'(x)-\phi'(-x)- \big(\theta q_+\phi(+0)-q_-\phi(-0)\big)\ln|x|
  \Big)\\
  =\lim_{x\to +0}\big(\theta \phi'(x)-\phi'(-x)-(\theta^2 q_+-q_-)\phi(-0)\ln x\big).
\end{multline*}
\end{rem}

\begin{rem}\rm
In the case when $q_-=q_+=0$ and $\kappa=0$, i.e., $Q$ has no singularity at the origin, the results of this article coincide with the results
obtained in \cite{GolovatyHrynivJPA:2010, Golovaty:2012, GolovatyHrynivProcEdinburgh2013, GolovatyIEOT2013}, where
the convergence of Hamiltonians with $(\alpha \delta'+\beta\delta)$-like potentials was discussed.
\end{rem}

Now we give some consequences for scattering problems.
Let us agree to say that the potentials in \eqref{Heps} are \textit{penetrable in the limit} as $\eps\to 0$ if the corresponding  Schr\"{o}dinger operators $H_\eps$ converge to operator $\cH$ associated with point interaction \eqref{ResonantConds}. If the operators converge to the direct sum $\cD_-\oplus \cD_+$, we say the potentials are \textit{opaque in the limit} or \textit{asymptotically opaque.}

Theorem~\ref{MainTheorem} asserts that potentials
$Q_\eps+\eps^{-2}U(\eps^{-1}\,\cdot)+\eps^{-1}V(\eps^{-1}\,\cdot)$ are generally asymptotically opaque. However, for each potential $U$ that possesses a zero-energy resonance there exists a regularization of $Q$ having the form \eqref{Qeps} such that condition \eqref{ThetaVcond} is fulfilled and hence the potentials are penetrable in the limit.
It is also worth noting that resonant potentials are not something exotic, because for any $U$ of compact support there exists a discrete infinite set of real coupling constants $\alpha$ for which potential $\alpha U$ has a zero-energy resonance.

Coming back to the problem of penetrability of the Coulomb potentials, let us suppose  that the  potentials of $H_\eps$ do not contain the  $\delta'$-like component, i.e., $H_\eps= -\frac{d^2}{dx^2}+Q_\eps +\eps^{-1}V(\eps^{-1}\,\cdot)$. We left the $\delta$-like potential in the Hamiltonian, because, as shown in the following theorem, $V$ has no direct influence on the penetrability in the limit.

\begin{thm}\label{TheoremU0}
Potentials $Q_\eps+\eps^{-1}V(\eps^{-1}\,\cdot)$ are penetrable in the limit as $\eps\to 0$ if and only if  $Q_\eps$ converge in the sense of distributions. This is in turn true if and only if the
condition
\begin{equation}\label{PenetrabilityCnd}
q_+-q_-=\int_{\cI}\kappa\,dx
\end{equation}
 holds.
In the penetrable case, $H_\eps$ converge to  operator $\cH$ associated with point interactions
\begin{equation}\label{PIasKurasov}
  \phi(+0)=\phi(-0), \quad \lim_{x\to +0}\big( \phi'(x)-\phi'(-x)-(q_+-q_-)\phi(0)\ln x\big)=\beta \phi(0),
\end{equation}
where $\beta$ is the mean value of $V$.
\end{thm}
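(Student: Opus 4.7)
The plan is to obtain Theorem \ref{TheoremU0} as the $U\equiv 0$ specialization of Theorem \ref{MainTheorem}, and to verify independently that distributional convergence of $Q_\eps$ is controlled by the same condition \eqref{PenetrabilityCnd}.

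First, observe that when $U\equiv 0$ the equation $-h''+Uh=0$ reduces to $-h''=0$, whose bounded solutions on $\Real$ are precisely the constants. Hence the zero potential is automatically resonant, and the normalization \eqref{HalphaCnds} forces $h_0\equiv 1$ and $\theta=1$. Plugging this into the hypotheses of Theorem \ref{MainTheorem}, condition \eqref{ThetaVcond} becomes $q_+-q_-=\int_{\cI}\kappa\,dx$, which is exactly \eqref{PenetrabilityCnd}; the constant $\mu$ from \eqref{Mu} becomes $\mu=\int_{\cI}V\,dx=\beta$; and the coupling conditions \eqref{ResonantConds} collapse to \eqref{PIasKurasov}. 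Theorem \ref{MainTheorem} then immediately delivers the dichotomy: if \eqref{PenetrabilityCnd} holds, $H_\eps$ converge in the norm resolvent sense to $\cH$ with the point interaction \eqref{PIasKurasov}; otherwise, $H_\eps\to \cD_-\oplus\cD_+$ and the potentials are opaque in the limit.

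To close the equivalence with distributional convergence of $Q_\eps$, I would test against an arbitrary $\phi\in C_c^\infty(\Real)$ and split $\int Q_\eps\phi\,dx=I_\eps(\phi)+J_\eps(\phi)$ into the pieces over $\{|x|>\eps\}$ and $(-\eps,\eps)$. For $J_\eps$, the substitution $y=x/\eps$ together with $|\ln\eps|\cdot\eps\to 0$ gives
\begin{equation*}
J_\eps(\phi)=\ln\eps\int_{-1}^{1}\kappa(y)\phi(\eps y)\,dy=\phi(0)\ln\eps\int_{\cI}\kappa\,dy+o(1).
\end{equation*}
For $I_\eps$, isolating the tails $q_\pm/x$ near the origin and using $\int_\eps^{a}dx/x=\ln a-\ln\eps$ yields the singular contribution $-(q_+-q_-)\phi(0)\ln\eps+O(1)$, with the remainder bounded uniformly in $\eps$. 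Summing, the coefficient of $\ln\eps$ equals $\phi(0)\bigl(\int_{\cI}\kappa-(q_+-q_-)\bigr)$, and vanishing of this coefficient for every $\phi$ is both necessary and sufficient for $\langle Q_\eps,\phi\rangle$ to admit a finite limit, equivalently for \eqref{PenetrabilityCnd}. When \eqref{PenetrabilityCnd} holds the bounded remainders converge and define the distributional limit of $Q_\eps$.

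The bulk of the analytic work sits inside Theorem \ref{MainTheorem}, so no serious obstacle arises here; the proof is a specialization plus a short direct computation. The only point requiring care is confirming that taking $U\equiv 0$ is consistent with the normalization conventions for half-bound states (namely that the trivial solution $h_0=1$ is the admissible representative) so that the values $\theta=1$, $\mu=\beta$, and the log-coefficient $q_+-q_-$ reproduce \eqref{PIasKurasov} verbatim.
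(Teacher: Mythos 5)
Your proposal is correct and follows essentially the same route as the paper: the paper also obtains Theorem \ref{TheoremU0} by specializing Theorem \ref{MainTheorem} to $U=0$ (with $h_0\equiv 1$, $\theta=1$, $\mu=\beta$, and \eqref{ThetaVcond} reducing to \eqref{PenetrabilityCnd}), and handles the equivalence with distributional convergence of $Q_\eps$ via exactly the $\ln\eps$-coefficient computation you sketch, packaged there as Lemma~\ref{LemQconv}.
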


\section{Coulomb-like potentials:\\ penetrability and opaqueness in the limit}

In this section we will prove Theorem~\ref{TheoremU0} and give some examples of the Coulomb-like potentials $Q_\eps$ that are  penetrable and opaque in the limit.

\subsection{Convergence of Coulomb-like potentials}
Function $Q$ of the form \eqref{QnearOrigin} near the origin
is nonintegrable and therefore mapping
$C^\infty_0(\Real)\ni \psi\mapsto \int_\Real Q\psi\,dx$
is not a distribution.
However we can find infinitely many functionals $q\in \cD'(\Real)$ which coincide with $Q$ outside the origin, i.e.,
\begin{equation*}
q(\psi)=\int_\Real Q \psi\,dx\qquad \text{for all } \psi\in C_0^\infty(\Real\setminus\{0\}).
\end{equation*}
Among such functionals there exists  the family
$\cF$ of distributions with the lowest order of singularity.
It is easy to check that each $q\in \cF$ is continuous in  space $C^{0,\gamma}_0(\Real)$ of H\"{o}lder continuous functions of compact support, but $q$ is not continuous in $C^{0}_0(\Real)$. In this sense,  $q$ is more singular than Dirac's $\delta$-function, but less singular than $\delta'$-function. Moreover, if $q_1$ and $q_2$ belong to $\cF$, then $q_2-q_1=c\delta(x)$ for some complex constant $c$, and therefore
$\cF=\{q_1+c\delta(x)\colon c\in \mathbb{C}\}$.

A word of explanation is  necessary with regard to regularization of $Q$ given by \eqref{Qeps}. Here we  use the analogy with formula $(\ln |x|)'=\cP \frac{1}{x}$. Suppose that  $G$ is an antiderivative of $Q$ such that $G(x)= q_-\ln (-x)$ for $x\in (-a,0)$ and $G(x)=q_+\ln x$ for $x\in (0,a)$.
Function $G$ specifies a regular distribution on the line, because it belongs to $L_{loc}^1(\Real)$. We set $g=G'$, where $G'$ is the derivative in the sense of distributions.
Indeed, $g$ coincides with $Q$ outside the origin and $g\in \cF$.
Let us now approximate $G$ in $\cD'(\Real)$ by the sequence of continuous functions
\begin{equation*}
  G_\eps(x)=\begin{cases}
   G(x),& \text{if \ } |x|>\eps,\\
   a\xe\ln\eps,& \text{if \ } |x|<\eps,
  \end{cases}
\end{equation*}
where $a$ is a $C^1$-function such that $a(-1)=q_-$ and $a(1)=q_+$ (see Fig.~\ref{FigGeps}).
Then  distribution $g$ admits a regularization by $L_{loc}^1(\Real)$-functions having the form
\begin{equation*}
 Q_\eps(x):= G_\eps'(x)=\begin{cases}
   Q(x),& \text{if \ } |x|>\eps,\\
  \frac{\ln\eps}{\eps}\, a'\xe,& \text{if \ } |x|<\eps.
  \end{cases}
\end{equation*}

\begin{figure}[t]
  \centering
   \includegraphics[scale=1]{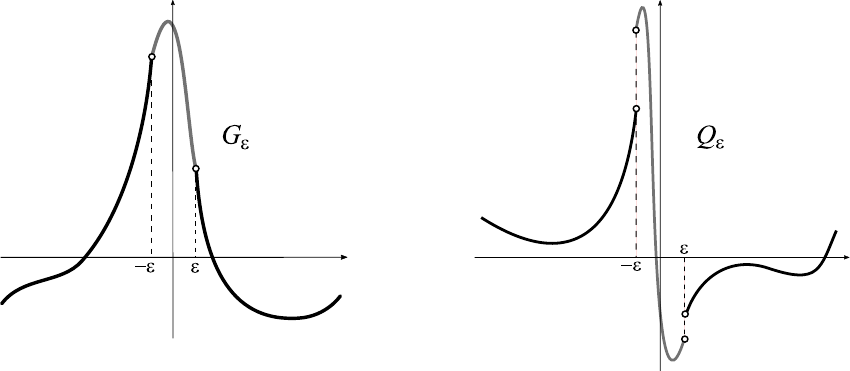}\\
  \caption{Plots of $G_\eps$ and $Q_\eps$}\label{FigGeps}
\end{figure}

\begin{lem}\label{LemQconv}
  A sequence $Q_\eps$,  given by \eqref{Qeps}, converges in $\cD'(\Real)$ if and only if condition \eqref{PenetrabilityCnd} holds. Moreover, the limit distribution, if it exists, belongs to $\cF$.
\end{lem}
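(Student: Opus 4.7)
The plan is to evaluate $\langle Q_\eps,\psi\rangle = \int_\Real Q_\eps \psi\,dx$ for an arbitrary test function $\psi \in C_0^\infty(\Real)$, isolate the logarithmically divergent contributions, and show they cancel if and only if \eqref{PenetrabilityCnd} holds.

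First, I would split the integral at $|x|=\eps$ and, within the exterior, further at $|x|=a$. In the inner region the substitution $x=\eps t$ yields
$$\frac{\ln\eps}{\eps}\int_{-\eps}^{\eps}\kappa\xe\psi\,dx = \ln\eps\int_{-1}^{1}\kappa(t)\psi(\eps t)\,dt = \psi(0)\ln\eps\int_{\cI}\kappa\,dt + O(\eps\ln\eps),$$
by Taylor-expanding $\psi$ around $0$.

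Next, in the exterior region the integral over $|x|>a$ is bounded uniformly in $\eps$, thanks to local integrability of $Q$ and compactness of $\supp\psi$. On $\eps<|x|<a$, I would substitute the explicit form \eqref{QnearOrigin} and write $\psi(x) = \psi(0) + (\psi(x)-\psi(0))$; this produces the only divergent piece $(q_- - q_+)\psi(0)\ln\eps$, plus two integrals with bounded integrands $(\psi(x)-\psi(0))/x$ that converge to finite quantities as $\eps \to 0$. Collecting the $\ln\eps$ contributions,
$$\langle Q_\eps,\psi\rangle = \Big(\textstyle\int_{\cI}\kappa\,dt - (q_+-q_-)\Big)\psi(0)\ln\eps + F(\psi) + o(1),$$
with $F(\psi)$ explicit and independent of $\eps$. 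Taking $\psi$ with $\psi(0)\neq 0$ shows that convergence of $\langle Q_\eps,\psi\rangle$ on all of $C_0^\infty(\Real)$ forces the coefficient of $\ln\eps$ to vanish, which is precisely \eqref{PenetrabilityCnd}. Conversely, under this condition $Q_\eps\to F$ in $\cD'(\Real)$.

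Finally, to check $F\in\cF$, I would observe that for $\psi\in C_0^\infty(\Real\setminus\{0\})$ one has $\psi(0)=0$, so the subtracted integrals collapse to ordinary integrals and $F(\psi) = \int_\Real Q\psi\,dx$; thus $F$ extends $Q$ as a distribution. Continuity of $F$ on $C_0^{0,\gamma}(\Real)$ follows from $|\psi(x)-\psi(0)|\le \|\psi\|_{C^{0,\gamma}}|x|^\gamma$, which renders the singular integrands $|x|^{\gamma-1}$-bounded and hence integrable near the origin; continuity on $C_0^0$ fails generically because $(\psi(x)-\psi(0))/x$ is not integrable for merely continuous $\psi$. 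This places $F$ in the minimal-order family $\cF$. The main obstacle is the bookkeeping of the logarithmic cancellations between the inner and outer parts; once organized correctly, both implications and the membership $F\in\cF$ emerge from the same decomposition.
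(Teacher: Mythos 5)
Your proposal is correct and follows essentially the same route as the paper: the same splitting at $|x|=\eps$ and $|x|=a$, the same extraction of the $\psi(0)\ln\eps$ terms via $\psi(x)-\psi(0)=O(|x|)$ (respectively $O(|x|^\gamma)$ for H\"older test functions), and the same observation that the limit functional coincides with $Q$ off the origin and is continuous on $C^{0,\gamma}_0(\Real)$, hence lies in $\cF$. No gaps worth noting.
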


\begin{proof}
For each $\psi\in C_0^\infty(\Real)$, we have
\begin{multline*}
 \int_{\Real}Q_\eps(x)\psi(x)\,dx=\frac{\ln\eps}{\eps} \int_{-\eps}^\eps \kappa\xe\psi(x)\,dx \\+q_+\int_\eps^a\frac{\psi(x)}{x}\,dx
 +q_-\int_{-a}^{-\eps}\frac{\psi(x)}{x}\,dx
     +\int_{|x|>a}Q(x)\psi(x)\,dx.
\end{multline*}
If $\psi(0)\neq 0$, then all integrals on the right
hand side  are of the order $O(\ln\eps)$ as $\eps\to 0$, except for the last one.
 Indeed, we have
\begin{align*}
    &\frac{\ln\eps}{\eps} \int_{-\eps}^\eps \kappa\xe\psi(x)\,dx-
  \psi(0)\ln\eps\: \int_{\cI} \kappa\,dt=
  \ln\eps\: \int_{-1}^1 \kappa(t)(\psi(\eps t)-\psi(0))\,dt,
 \\
  &\int_\eps^a\frac{\psi(x)}{x}\,dx+\psi(0)\ln\eps
    =\psi(0)\ln a+\int_\eps^a\frac{\psi(x)-\psi(0) }{x}\,dx,
    \\
   &\int^{-\eps}_{-a}\frac{\psi(x)}{x}\,dx-\psi(0)\ln\eps
    =-\psi(0)\ln a+\int^{-\eps}_{-a}\frac{\psi(x)-\psi(0) }{x}\,dx.
\end{align*}
The right-hand sides have finite limits as $\eps\to 0$,
since $\psi(x)-\psi(0)=O(x)$ as $x\to 0$. We obtain then
\begin{equation*}
 \int_{\Real}Q_\eps(x)\psi(x)\,dx=
 \left(\int_{\cI}\kappa\,dt-q_++q_-\right)\psi(0)\ln\eps +f_\eps(\psi),
\end{equation*}
where $\{f_\eps\}_{\eps>0}$ is a sequence of continuous functionals which converges in $\cD'(\Real)$ as $\eps\to 0$. Therefore sequence $Q_\eps$  converges in the space of distributions  if and only if $q_+-q_-=\int_{\cI}\kappa\,dt$.

Note that we have actually proved that condition \eqref{PenetrabilityCnd}
is necessary and sufficient for the convergence of functionals $Q_\eps$
in H\"{o}lder space $C^{0,\gamma}_0(\Real)$, $\gamma\in(0,1)$. In fact, for any $\psi\in C^{0,\gamma}_0(\Real)$ we have $\psi(x)-\psi(0)=O(x^\gamma)$ as $x\to 0$, and this is sufficient for the convergence of $f_\eps$.
Therefore if $Q_\eps$ converge in   $\cD'(\Real)$, then
the limit distribution belongs to $\cF$.
\end{proof}

\subsection{Proof of Theorem~\ref{TheoremU0}}
The proof deals with the convergence of operators
\begin{equation}\label{HepsU=0}
H_\eps= -\frac{d^2}{dx^2}+Q_\eps +\eps^{-1}V(\eps^{-1}\,\cdot),
\end{equation}
and so we have the partial case of Theorem~\ref{MainTheorem} when $U=0$.
First of all, note that the trivial potential $U=0$ possesses  a zero-energy reso\-nance  with half-bound state $h_0=1$. Since $\theta=1$, \eqref{Mu} and \eqref{ThetaVcond} become $\mu=\int_{\Real}V\,dx=:\beta$  and  $$q_+-q_-=\int_{\cI}\kappa\,dx$$ respectively. Hence, if the last condition holds, then operators $H_\eps$, given by \eqref{HepsU=0}, converge to operator $\cH$ associated with non-trivial point interactions \eqref{PIasKurasov}, according to Theorem~\ref{MainTheorem}.
In this case we obtain a partial transparency of the potentials $Q_\eps +\eps^{-1}V(\eps^{-1}\,\cdot)$ in the limit. Next, in view of Lemma~\ref{LemQconv}, the penetrability of these potentials is equivalent to the  convergence of $Q_\eps$ in the space of distributions.

\subsection{Examples of Coulomb-like potentials}
Following are some examples of potentials $Q_\eps$, illustrating
the penetrability and impenetrability of the Coulomb-like potentials in the limit.
Let us consider two regularizations of the classic Coulomb potential $Q(x)=-|x|^{-1}$:
\begin{equation*}
  Q_{0,\eps}(x)=
  \begin{cases}
   -\dfrac{1}{|x|}, & \text{if \ } |x|>\eps,\\
 \hskip12pt 0, & \text{if \ } |x|<\eps;
  \end{cases}\qquad
  Q_{1,\eps}(x)=
  \begin{cases}
  \hskip4pt -\dfrac{1}{|x|}, & \text{if \ } |x|>\eps,\\
\eps^{-1}|\ln\eps|, & \text{if \ } |x|<\eps
  \end{cases}
\end{equation*}
(see Fig.~\ref{FigEvenReg}).
Both of the sequences $Q_{j,\eps}$ converge to $-|x|^{-1}$ pointwise, but $Q_{1,\eps}$ only converges in the sense of distributions. In the case of $Q_{1,\eps}$, we have $q_-=1$, $q_+=-1$ and $\kappa=-1$, and therefore
condition \eqref{PenetrabilityCnd} holds.
In view of Theorem~\ref{TheoremU0}, potentials $Q_{0,\eps}$ are asymptotically opaque; whereas  $Q_{1,\eps}$ are penetrable in the limit as $\eps\to 0$. In other words, the transition probability
$|T_\eps(k)|^2$ calculated for $Q_{0,\eps}$  tends to zero $\eps\to 0$ for all $k$, but the corresponding probability for $Q_{1,\eps}$ has a  limit $|T(k)|^2$, which is a non-zero function of $k$.

\begin{figure}[h]
  \centering
   \includegraphics[scale=1.5]{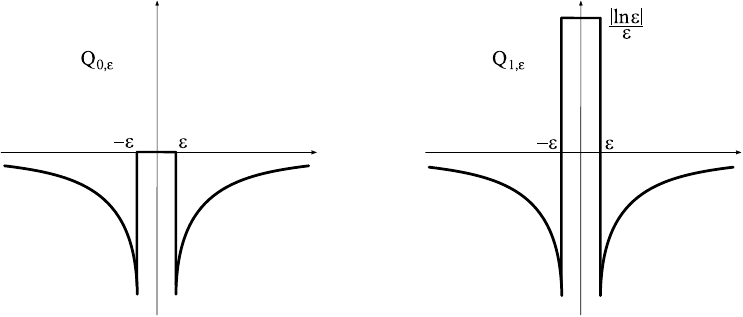}\\
  \caption{Impenetrable and penetrable regularizations  of the even Coulomb potential}\label{FigEvenReg}
\end{figure}

For the odd Coulomb potential $Q(x)=x^{-1}$, we can also provide two different regularizations, plotted in Fig.~\ref{FigOddReg}, as follows:
\begin{equation*}
  Q_{2,\eps}(x)=
  \begin{cases}
   \hskip14pt\dfrac{1}{x}, & \text{if \ } |x|>\eps\\
 \eps^{-1}|\ln\eps|, & \text{if \ } |x|<\eps
  \end{cases}, \qquad
  Q_{3,\eps}(x)=
  \begin{cases}
   \hskip14pt\dfrac{1}{x}, & \text{if \ } |x|>\eps,\phantom{\int\limits^N_N}\\
 \eps^{-2}|\ln\eps|\, x, & \text{if \ } |x|<\eps
  \end{cases}.
\end{equation*}
In this case, $q_-=q_+=1$ and so condition \eqref{PenetrabilityCnd} holds for potentials $Q_{3,\eps}$ only, when $\kappa(t)=-t$. Therefore $Q_{3,\eps}$ are penetrable in the limit as $\eps\to 0$, unlike the potentials $Q_{2,\eps}$, which are  asymptotically opaque.

\begin{figure}[b]
  \centering
   \includegraphics[scale=1.5]{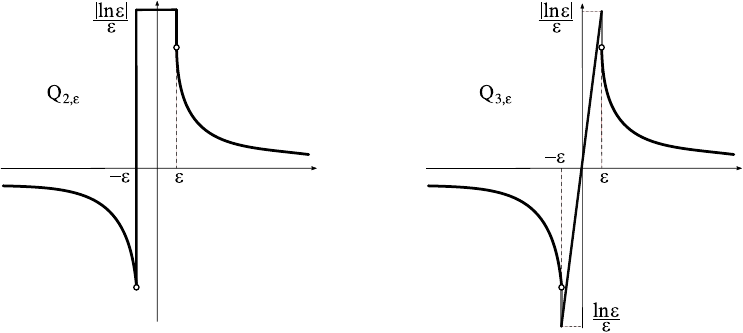}\\
  \caption{Impenetrable and penetrable regularizations  of the odd Coulomb potential}\label{FigOddReg}
\end{figure}

Many authors  have regularized the Coulomb potentials by so-called truncated ones of the form
\begin{equation*}
  R_\eps(x)=
  \begin{cases}
   Q(x), & \text{if \ } |x|>\eps,\\
   a_\eps(x), & \text{if \ } |x|<\eps,
  \end{cases}
\end{equation*}
where $|a_\eps|\leq c\eps^{-1}$  (see Fig.~\ref{FigTruncatedR}).
From asymptotical point of view, $R_\eps$ can be regarded as
potentials $Q_\eps$ with $\kappa=0$. It follows from the proof of Lemma~\ref{LemQconv} that $R_\eps$ can converge in  $\mathcal{D}'(\Real)$ if and only if $q_-=q_+$, i.e., $Q(x)$ is the odd Coulomb potential near the origin. Moshinsky \cite{Moshinsky:1993} was the first   who noticed  the penetrability of potential $\gamma/x$.
On the other hand, a regula\-ri\-zation of  the even Coulomb potential $\gamma/|x|$ by the truncated potentials is always asymptotically opaque and leads to the Dirichlet condition in the limit \cite{Loudon:1959, Andrews:1976, HainesRoberts:1969, Oseguera_deLlano:1993}.
The same assertion is also valid for mo\-di\-fied
Coulomb interactions having  the form $M_\eps(x)=-\dfrac{1}{|x|+\eps}$; this potentials also diverge in $\cD'(\Real)$. Such regularizations were considered in \cite{Loudon:1959, HainesRoberts:1969,MehtaPatil:1978,Gesztesy:1980, Klaus:1980}.

\begin{figure}[h]
  \centering
  \includegraphics[scale=1.8]{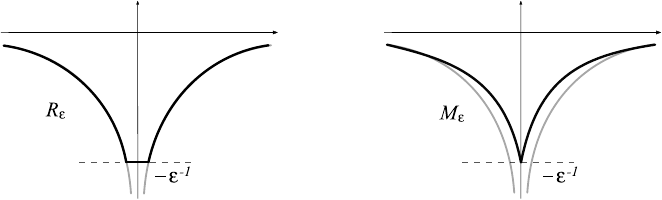}\\
  \caption{Truncated and modified potentials}\label{FigTruncatedR}
\end{figure}

It should be noted that the equivalence of penetrability in the limit and the convergence in the space of distributions  for potentials $Q_\eps+\eps^{-1}V(\eps^{-1}\,\cdot)$ is consistent with  Kurasov's results \cite{Kurasov:1996, Kurasov:1997}. Kurasov has interpreted the formal differential expression $-\frac{ d^2}{dx^2}-\frac{\gamma}{x}$ in $\Real$ as a map from some Hilbert space to the space
of distributions. This  operator has been defined in the principal value sense
\begin{equation*}
  H=v.p.\left(-\frac{ d^2}{dx^2}-\frac{\gamma}{x}\right)+\beta\delta(x)
\end{equation*}
on the whole line. As shown in \cite{Kurasov:1997},  $H$ is the self-adjoint operator that is defined by  $H\phi=-\phi''-\gamma\phi/x$
on  functions from $W_2^2(\Real\setminus(-\eps,\eps))$ for every positive $\eps>0$ and satisfying the boundary conditions
$$
 \phi(+0)=\phi(-0), \qquad b_+(\phi)-b_-(\phi)=\beta\phi(0),
$$
where
$b_\pm(\phi)=\lim_{x\to \pm 0}\big(\phi'(x)-\gamma \phi(\pm0)\ln|x|\big)
$. Indeed, such considerations implicitly presupposed the existence
of a regularization $Q_\eps+\eps^{-1}V(\eps^{-1}\,\cdot)$ of the pseudopotential $-\cP\frac1x+\beta\delta(x)$, which converges in $\mathcal{D}'(\Real)$.  These coupling conditions agree with \eqref{PIasKurasov}, if $\gamma=q_+=q_-$.

Returning to the question of penetrability of the one-dimensional Coulomb potentials, it is pro\-bab\-ly worth considering that this question  has no unambiguous answer. One should agree with the authors of \cite{FischerLeschkeMuller:1995} that  mathematics alone cannot tell which boundary conditions for the wave function at the origin should be chosen to model a given experimental situation.


\section{Proof of Theorem \ref{MainTheorem}}\label{SecProofMain}

\subsection{Formal construction of   limit operator}\label{Sec31}
Let us consider the equation
\begin{equation}\label{EqnWithSingularity}
  -y''+(Q-\zeta)y=f, \quad x\in \Real\setminus\{0\},
\end{equation}
for given $f\in L^2(\Real)$ and $\zeta\in \mathbb{C}$.
In  one-sided neighbourhoods of the origin the last equation becomes
\begin{equation*}
  -y''+\left(\frac{q_-}{x}-\zeta\right)y=f, \quad x\in (-a,0);
  \qquad
  -y''+\left(\frac{q_+}{x}-\zeta\right)y=f, \quad x\in (0,a).
\end{equation*}

The following proposition was proved in \cite{Kurasov:1997}.
\begin{prop}\label{PropYasymp}
 Let $y$ be a solution of \eqref{EqnWithSingularity} such that $y\in L^2(\Real)$. Then there exist the finite limits $y(\pm 0)=\lim_{x\to \pm0} y(x)$ and
 $$
   y(x)=y(\pm0)+O(|x|^{1/2})\quad \text{as \ } x\to \pm0.
 $$
For the derivative of the solution we have asymptotics
\begin{equation*}
  y'(x)=q_\pm y(\pm 0)\ln|x|+b_\pm(y)+o(1) \quad \text{as } x\to\pm0,
\end{equation*}
where $b_-$ and $b_+$ are some constants depending on  $y$.
\end{prop}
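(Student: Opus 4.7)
The plan is to carry out the argument on the right half-line $(0,a)$; the behaviour on $(-a,0)$ then follows from the reflection $x\mapsto -x$ (which turns the coefficient $q_-/x$ into $-q_-/\tilde x$ and delivers the matching sign in the final formula). On $(0,a)$ the equation reads $y''=(q_+/x-\zeta)y-f$, and integrating from $x$ to $a$ yields the identity
\begin{equation*}
  y'(x) = y'(a) - \int_x^a \left(\frac{q_+}{s}-\zeta\right) y(s)\,ds + \int_x^a f(s)\,ds, \qquad 0<x<a, \tag{$\ast$}
\end{equation*}
which will serve as the workhorse throughout.

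The first step is a bootstrap to upgrade the a priori $L^2$ integrability of $y$ to boundedness near the origin. Applying Cauchy--Schwarz to $\int_x^a|y(s)|/s\,ds$ together with $f\in L^2$ gives $y'(x)=O(x^{-1/2})$ from $(\ast)$, and integrating this estimate from $x$ to $a$ shows that $y$ is bounded on $(0,a)$. Feeding $y\in L^\infty$ back into $(\ast)$ refines the bound to $y'(x)=O(|\ln x|)$, which is integrable near $0$; hence $y(0+):=\lim_{x\to 0+} y(x)$ exists, and
\begin{equation*}
  y(x)-y(0+) = \int_0^x y'(t)\,dt = O(x|\ln x|) = O(x^{1/2}),
\end{equation*}
which is the claimed right-hand asymptotic.

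For the derivative asymptotics I would split $y(s) = y(0+) + (y(s)-y(0+))$ inside the singular integral in $(\ast)$. The constant piece produces
\begin{equation*}
  q_+ y(0+)\int_x^a \frac{ds}{s} = -q_+ y(0+)\ln x + q_+ y(0+)\ln a,
\end{equation*}
which is the sole source of the logarithmic divergence; the remainder is absolutely integrable on $(0,a)$, since $(y(s)-y(0+))/s = O(|\ln s|)$ by the previous step. The integrals $\zeta\int_x^a y\,ds$ and $\int_x^a f\,ds$ have finite limits as $x\to 0+$ in view of $y\in L^\infty$ and $f\in L^2$. Collecting all the finite contributions into a single constant $b_+(y)$ gives $y'(x) = q_+ y(0+)\ln|x| + b_+(y) + o(1)$ as $x\to 0+$, as required.

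The only genuinely delicate step is the opening bootstrap: with merely $y\in L^2$ on hand and a $1/s$ weight in the kernel, some care is needed to justify that $(\ast)$ makes sense pointwise and that its right-hand side yields genuine pointwise control on $y'$; this is precisely where the assumption $y\in\cQ_+$, which provides $y,y'\in AC_{loc}(0,a)$ together with $-y''+Qy\in L^2$, enters. Once $y$ is known to be bounded near $0$, everything else is a careful but mechanical iteration of the integral identity $(\ast)$.
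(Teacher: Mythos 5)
Your argument is correct: the once-integrated identity $(\ast)$ is legitimate because a solution of \eqref{EqnWithSingularity} has $y,y'\in AC_{loc}$ away from the origin, the Cauchy--Schwarz bootstrap ($y'=O(x^{-1/2})$, hence $y$ bounded, hence $y'=O(|\ln x|)$, hence $y(\pm0)$ exists with $y(x)-y(\pm0)=O(x|\ln x|)=O(|x|^{1/2})$) is sound, the splitting $y=y(0+)+(y-y(0+))$ correctly isolates the logarithm with coefficient $q_+y(+0)$, and the reflection $x\mapsto-x$ handles the left half-line with the right signs. Note, however, that the paper does not prove Proposition~\ref{PropYasymp} at all --- it is quoted from Kurasov's work --- and the in-text analogue is the stronger Lemma~\ref{LemmaAsymptU}, whose proof runs differently: there the equation is integrated twice to get the representation \eqref{UxRepr}, the existence of $u(+0)$ comes from dominated convergence applied to the kernel $(s-x)/s$ in \eqref{U0Repr}, and the bound $|u(x)-u(+0)|\leq C\|f\|\,x|\ln x|$ is extracted via Gronwall's inequality, with $b_\pm(u)$ written explicitly and the derivative remainder controlled by $O(x^{1/2})$. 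The reason for that extra machinery is uniformity: Lemma~\ref{LemmaAsymptU} needs all constants to be of the form $C\|f\|$ (using the resolvent a priori bounds \eqref{EstU}), since this uniformity feeds into the $\eps^{1/4}$ resolvent estimate; your bootstrap, which fixes one solution $y$ and lets the constants depend on it, is more elementary (no Gronwall, no second integration) and entirely adequate for the qualitative statement of the Proposition --- indeed your remainder is even $O(|x|^{1/2})$ rather than the stated $o(1)$ --- but it would have to be redone with explicit dependence on $\|f\|$ to replace the Lemma. One small phrasing point: you invoke ``the assumption $y\in\cQ_+$'', which is not among the hypotheses; what you actually use is only that $y$ solves the equation classically off the origin with $f,y\in L^2$, which is exactly what the Proposition assumes.
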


We will use this proposition for some formal considerations.
To proof the norm resolvent convergence of $H_\eps$  we do really need more subtle estimates of the remainder terms; a stronger version of these asymptotics is presented in Lemma~\ref{LemmaAsymptU}.
Set $y_\eps=(H_\eps-\zeta)^{-1}f$ for $f\in L^2(\Real)$ and $\zeta\in \Comp\setminus\Real$.
We look for the formal asymptotics of $y_\eps$, as $\eps\to 0$, having the form
\begin{equation}\label{AsymptoticsYepsC1}
 y_\eps(x)\sim
  \begin{cases}
      u(x), & \text{if }|x|>\eps,\\
      v_0\xe+v_1\xe\eps\ln\eps+ v_2\xe\eps, & \text{if }  |x|<\eps.
 \end{cases}
\end{equation}
We also assume that  the coupling conditions
\begin{equation}\label{CouplCondsEps}
 [y_\eps]_{\pm\eps}=0, \qquad [y'_\eps]_{\pm\eps}=0
\end{equation}
hold, where $[\,\cdot\,]_x$ is the jump of a function at point $x$.
Function $y_\eps$ is a unique solution of  equation
\begin{equation}\label{ResolventDiffEq}
 -y_\eps''+\big(Q_\eps(x)+\eps^{-2}U(\eps^{-1} x)+\eps^{-1}V(\eps^{-1} x)\big)y_\eps=\zeta y_\eps+f
\end{equation}
belonging to the domain of $H_\eps$.
Since the interval  on which the $(\alpha\delta'+\beta\delta)$-like perturbation is localized shrinks to a point,    $u$ must solve the equation
\begin{equation*}
-u''+Q u=\zeta u+f \qquad\text{in } \Real\setminus\{0\}.
\end{equation*}
This solution can not be uniquely determined without additional conditions at the origin. One naturally expects that these conditions depend on the perturbation.

Suppose that $|x|<\eps<a$. Then we can as follows rewrite equation \eqref{ResolventDiffEq} in the terms of new variable $t=x/\eps$.
If we set $v^\eps(t)=y_\eps(\eps t)$, then
\begin{equation}\label{ResolventDiffEqT}
 -\frac{d^2 v^\eps}{dt^2}+\big(U(t)+\eps\ln\eps\,\kappa(t)+\eps V(t)\big)v^\eps=\eps^2\zeta v^\eps+\eps^2f,\quad  t\in\cI.
\end{equation}
Furthermore, in view of Proposition~\ref{PropYasymp},  matching conditions \eqref{CouplCondsEps} imply
\begin{align*}
v_0(\pm 1)+O(\eps\ln\eps)&= u(\pm 0)+O(\eps^{1/2}), \\
\eps^{-1}v_0'(\pm 1)+ v_1'(\pm 1)\ln\eps+ v_2'(\pm 1)&= q_\pm u(\pm 0)\ln\eps+b_\pm(u)+o(1).
\end{align*}
In particular, we have
\begin{equation}\label{CConds}
  v_0(\pm1)=u(\pm0),\quad v_0'(\pm 1)=0, \quad v_1'(\pm 1)=q_\pm u(\pm0),\quad v_2'(\pm 1)=b_\pm(u).
\end{equation}
Substituting \eqref{AsymptoticsYepsC1} for $|x|<\eps$ into \eqref{ResolventDiffEqT} and applying \eqref{CConds} yield
\begin{align}
&\label{ProblemV0}
    -v_0''+Uv_0=0, \;\; t\in\cI, &&
    v_0'(-1)=0, \quad v_0'(1)=0;
\\\label{ProblemV1}
&-v_1''+Uv_1=-\kappa v_0, \;\; t\in\cI,&&
    v_1'(-1)=q_-u(-0), \;\; v_1'(1)=q_+u(+0);
\\\label{ProblemV2}
&  -v_2''+Uv_2=-Vv_0,\;\; t\in\cI,&&
    v_2'(-1)=b_-(u), \;\; v_2'(1)=b_+(u).
\end{align}

Let us first suppose that  potential $U$ is resonant.
Since the supports of $U$ is contained in $\cI$,  a half-bound state $h$ is  constant outside $\cI$ and its restriction to $\cI$ is a non-trivial solution of  the boundary value problem
\begin{equation}\label{NeumanProblemWithAlpha}
     - h'' +Uh= 0, \quad t\in \cI,\qquad h'(-1)=0, \quad h'(1)=0.
\end{equation}
Moreover  $h(\pm\infty)=h(\pm1)$ and hence $h_0(-1)=1$ and $h_0(1)=\theta$.
Then the equation in \eqref{ProblemV0} has a one-parameter family of solutions $v_0=ch_0$. But owing to $v_0(-1)=u(-0)$, we have
 \begin{equation}\label{V0}
 v_0=u(-0)h_0.
\end{equation}
Hence   $v_0(1)=u(-0)h_0(1)=\theta u(-0)$. On the other hand, $v_0(1)=u(+0)$ by \eqref{CConds}.  From this we deduce
\begin{equation}\label{CoupCnd0}
  u(+0)=\theta u(-0).
\end{equation}

Next, problem \eqref{ProblemV1} is solvable if and only if
\begin{equation}\label{CoupCnd00}
  \theta q_+u(+0)-q_-u(-0)=u(-0)\int_{\cI}\kappa h_0^2\,dx,
\end{equation}
because the corresponding homogeneous problem possesses non-trivial solutions. The last condition can be easy obtained by multiplying the equation in \eqref{ProblemV1} by $h_0$ and integrating by parts.
Combining \eqref{CoupCnd0} and \eqref{CoupCnd00} gives us
\begin{equation}\label{LinSys}
  \begin{cases}
   \phantom{ \theta q_+}  u(+0)-\kern6pt\theta u(-0)=0,\\
     \theta q_+u(+0)-\left(q_-+\int_{\cI}\kappa h_0^2\,dx\right)u(-0)=0.
  \end{cases}
\end{equation}
The linear system admits a nonzero solution $(u(-0),u(+0))$ if and only if \begin{equation}\label{Qcond}
  \theta^2 q_+-q_-=\int_{\cI}\kappa h_0^2\,dx.
\end{equation}
If \eqref{CoupCnd00} holds, then \eqref{ProblemV1}
has a one-parameter family of solutions $v_1=v_1^*+c_1 h_0$. Let us fix $v_1$ such that $v_1(-1)=0$; this is  possible, because $h_0(-1)\neq 0$.

We at last turn to problem \eqref{ProblemV2}. Multiplying the equation in \eqref{ProblemV2} by half-bound state $h_0$ and integrating by parts, we can similarly compute  the solvability condition
\begin{equation}\label{CoupCndH1}
  \theta b_+(u)-b_-(u)=\int_\cI V v_0 h_0\,dt.
\end{equation}
We can choose $v_2$ to satisfy $v_2(-1)=0$.
Recalling now \eqref{V0}, we can rewrite \eqref{CoupCndH1}   in the form
\begin{equation}\label{CoupCnd1}
  \theta b_+(u)-b_-(u)=u(-0)\int_\cI V h_0^2\,dt.
\end{equation}

Therefore if potential $U$ is resonant and \eqref{Qcond} holds, then the leading term $u$ of asymptotics \eqref{AsymptoticsYepsC1} must solve the
problem
\begin{equation}\label{ProblemUResonance}
\begin{gathered}
  -u''+Qu=\zeta u+f \qquad\text{in } \Real\setminus\{0\},\\ u(+0)-\theta u(-0)=0, \qquad \theta b_+(u)-b_-(u)=\mu u(-0),
\end{gathered}
\end{equation}
where $\mu$ is given by \eqref{Mu}. The coupling conditions at the origin agree with \eqref{ResonantConds} in view of Remark~\ref{RemarkOnConds}.

In the case when either $U$ has no zero-energy resonance or else $U$ is resonant, but \eqref{Qcond} does not hold, both the values $u(-0)$ and $u(+0)$ equal zero. Indeed, if $U$ is not resonant, then problem \eqref{ProblemV0} has only trivial solution $v_0$ and then the first condition in  \eqref{CConds} implies $u(0)=0$. On the other hand, if $U$ is resonant,  but \eqref{Qcond} does not hold, then system \eqref{LinSys}
has a unique solution $u(-0)=u(+0)=0$.
Hence $u$ should be a solution of the problem
\begin{equation}\label{EqnUnonresonance}
  -u''+Qu=\zeta u+f \quad\text{in } \Real\setminus\{0\},\quad u(0)=0.
\end{equation}

\subsection{Improvement of asymptotics}
We will again focus our attention on the case of  resonant potential $U$. Our aim is to construct an element $u_\eps\in \dmn H_\eps$ that approximates  $y_\eps=(H_\eps-\zeta)^{-1}f$.

From now on, $W_2^k(\Omega)$ and $W_2^{k, loc}(\Omega)$ stand for the Sobolev spaces and  $\|f\|$ stands for $L^2(\Real)$-norm of a function~$f$.
To obtain the uniform approximation of $y_\eps$ in $L^2(\Real)$ with respect to $f$, we will refine asymptotics \eqref{AsymptoticsYepsC1}. Let $z_\eps$ be a solution of the Cauchy problem
\begin{equation}\label{CPZeps}
 -z''+U(t)z=f(\eps t), \qquad z(-1)=0,\quad z'(-1)=0.
\end{equation}
We introduce the function
\begin{equation}\label{AsymptoticsWepsZ}
  w_\eps(x)=
  \begin{cases}
      u(x), & \text{if }|x|>\eps,\\
      v_0\xe+v_1\xe\eps\ln\eps+ v_2\xe\eps+\eps^2z_\eps\xe, & \text{if }  |x|<\eps.
 \end{cases}
\end{equation}

Note that $w_\eps$ is not in general smooth enough to belong to the domain of $H_\eps$; by construction,  approximation $w_\eps$ belongs to
$W_{2, loc}^2(\Real\setminus\{-\eps,\eps\})$ and has jump discontinuities  at the points $x=\pm\eps$. We will show that the jumps of $w_\eps$ and its first derivative are small enough uniformly on $f$, and therefore there exists a corrector $\rho_\eps$ with the infinitesimal $L^2$-norm, as $\eps\to 0$, such that $w_\eps+\rho_\eps\in \dmn H_\eps$.

We introduce two cut-functions $\xi$ and $\eta$ that are smooth outside the origin and  have compact supports contained in $[0,\frac12a]$, where $a$ is the same as in \eqref{QnearOrigin}. In addition,
$\xi(+0)=1$, $\xi'(+0)=0$, $\eta(+0)=0$ and $\eta'(+0)=1$.
Let us set
\begin{equation}\label{CorrectoR}
\rho_\eps(x)=[w_\eps]_{-\eps}\, \xi(-x-\eps)-[w_\eps']_{-\eps}\,\eta(-x-\eps)
-[w_\eps]_{\eps}\,\xi(x-\eps)-[w_\eps']_{\eps}\,\eta(x-\eps).
\end{equation}
It is easy to check that $[\rho_\eps^{(k)}]_{\pm \eps}=-[w_\eps^{(k)}]_{\pm \eps}$ for $k=0,1$. Moreover,
$\rho_\eps(x)=0$ for $x\in (-\eps,\eps)$.
From this we conclude that  $w_\eps+\rho_\eps\in W_{2, loc}^2(\Real)$,
hence that $w_\eps+\rho_\eps\in\dmn H_\eps$.

\subsection{Some uniform bounds}
Recall that in Subsection~\ref{Sec31} we have actually derived  $u=(\cH-\zeta)^{-1}f$.  Hence  $u\in W^2_{2,loc}(\Real\setminus (-b,b))$ for any $b>0$. By the Sobolev imbedding theorems, function $u$ is continuously differentiable on $\Real\setminus \{0\}$. In addition, the estimates hold
\begin{equation}\label{EstU}
  \|u\|\leq c_1\|f\|, \qquad  \|u\|_{C^1(K)}\leq c_2(K)\|f\|
\end{equation}
for any compact set $K$ that does not contain the origin.

\begin{lem}\label{LemmaAsymptU}
The following estimates
\begin{gather}
\label{ULnEst}
    |u(x)-u(\pm0)|\leq C_1\|f\|\,|x\ln|x||,
    \\\label{UpEst}
  \big|u'(x)-q_\pm u(\pm 0)\ln|x|-b_\pm(u)\big|\leq C_2\|f\|\, |x|^{1/2}
\end{gather}
 hold as $x\to\pm0$, where $b_\pm$ are
 linear bounded functionals on $\dmn \cH$.  In addition,
 \begin{equation}\label{UBEsts}
  |b_\pm(u)|\leq C_3\|f\|.
 \end{equation}
The constants $C_k$ do not depend on $f$.
\end{lem}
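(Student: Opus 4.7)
The plan is to work on the interval $(0,a)$ where $u$ satisfies $-u''+(q_+/x-\zeta)u=f$ (the treatment on $(-a,0)$ is symmetric). I would first obtain pointwise control at a fixed interior point $x_0\in(0,a)$: since $Q$ is bounded on $[x_0,a]$ and $u=(\cH-\zeta)^{-1}f$ enjoys the resolvent bound $\|u\|\le C\|f\|$, standard interior elliptic regularity gives $\|u\|_{W_2^2(x_0,a)}\le C\|f\|$, and Sobolev embedding yields $|u(x_0)|+|u'(x_0)|\le C\|f\|$. These serve as Cauchy data for the ODE on $(0,x_0)$.

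Next I would introduce a fundamental system $\phi_1,\phi_2$ of the homogeneous equation $-y''+(q_+/x-\zeta)y=0$ on $(0,a)$, produced by a Frobenius-type analysis: $\phi_1$ is the bounded solution normalized by $\phi_1(0+)=1$ and obeys $\phi_1(x)=1+q_+x\ln x+Bx+O(x^2\ln x)$, while the companion satisfies $\phi_2(x)=x+O(x^2\ln x)$; the Wronskian $W$ is a nonzero constant, and both expansions with their derivatives are obtained by a standard contraction argument on $(0,x_0)$ with uniform remainders. Setting $g=\zeta u+f$ (so $\|g\|\le C\|f\|$ by the a priori bound on $u$), variation of parameters gives
\begin{equation*}
u(x)=c_1\phi_1(x)+c_2\phi_2(x)+w(x),\quad
w(x)=W^{-1}\Bigl[\phi_1(x)\int_0^x\phi_2 g\,dt-\phi_2(x)\int_0^x\phi_1 g\,dt\Bigr],
\end{equation*}
with $w(0+)=w'(0+)=0$. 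Matching $u$ and $u'$ at $x_0$ against the Cauchy data of the first step yields $|c_1|+|c_2|\le C\|f\|$.

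From this representation the estimates are read off. Cauchy--Schwarz with $|\phi_1(t)|\le C$ and $|\phi_2(t)|\le Ct$ gives $|\int_0^x\phi_1 g\,dt|\le Cx^{1/2}\|f\|$ and $|\int_0^x\phi_2 g\,dt|\le Cx^{3/2}\|f\|$, so $|w(x)|\le Cx^{3/2}\|f\|$; combined with $\phi_1(x)-1=O(x\ln x)$ and $\phi_2(x)=O(x)$ this delivers \eqref{ULnEst} with $u(+0)=c_1$ and $|u(+0)|\le C\|f\|$. Differentiating and using $\phi_1'(x)=q_+\ln x+O(1)$ and $\phi_2'(x)=1+O(x\ln x)$ isolates the singular part $c_1q_+\ln x=q_+u(+0)\ln x$ and leaves a constant plus an $O(x^{1/2}\|f\|)$ remainder; that constant is $b_+(u)$, which as a linear combination of $c_1$ and $c_2$ satisfies $|b_+(u)|\le C\|f\|$, giving \eqref{UpEst} and \eqref{UBEsts}. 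Linearity of $b_\pm$ is immediate from its pointwise-limit definition on $\dmn\cH$, and boundedness in the graph norm then follows from the boundedness of $(\cH-\zeta)^{-1}\colon L^2(\Real)\to\dmn\cH$.

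The principal technical obstacle is the implicit occurrence of $u$ inside $g=\zeta u+f$; this is not a vicious circle because the resolvent $L^2$-bound on $u$ is available from the outset, so $g$ can be treated as a genuine $L^2$-source of size $\|f\|$. A subsidiary but essential point is the rigorous Frobenius construction of $\phi_1,\phi_2$ with uniform remainder estimates: although routine, it must be organized so that all constants in \eqref{ULnEst}--\eqref{UBEsts} depend only on $q_\pm$, $\zeta$, and the fixed cutoff $a$, and not on $u$ or $f$.
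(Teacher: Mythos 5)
Your proof is correct in substance, but it follows a genuinely different route from the paper. The paper never constructs a fundamental system: it integrates the equation twice from $x$ to $a$, keeping the Coulomb term $q_+u(s)/s$ under the integral sign, reads off $u(+0)$ and an explicit integral formula for $b_+(u)$ from that representation, and closes the self-referential estimate for $u(x)-u(+0)$ (the representation still contains $u$ itself) with Gronwall's inequality; all constants come from the a priori bounds $\|u\|\leq c\|f\|$, $\|u\|_{C^1(K)}\leq c(K)\|f\|$ of \eqref{EstU}. You instead put the $u$-dependence into two coefficients: a Frobenius pair $\phi_1=1+q_+x\ln x+Bx+O(x^2\ln x)$, $\phi_2=x+O(x^2)$ (indicial roots $0$ and $1$, hence the logarithm), variation of parameters for the particular part, and matching of Cauchy data at an interior point $x_0$ to get $|c_1|+|c_2|\leq C\|f\|$; then \eqref{ULnEst}--\eqref{UBEsts} are read off termwise, with $u(+0)=c_1$ and $b_+(u)$ a fixed linear combination of $c_1,c_2$. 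This buys you explicit structure (it reproves Proposition~\ref{PropYasymp} along the way and avoids Gronwall), but it costs an extra lemma the paper does not need: the rigorous construction of $\phi_1,\phi_2$ with remainder estimates that survive differentiation, since \eqref{UpEst} requires $\phi_1'(x)=q_+\ln x+\mathrm{const}+O(x\ln x)$, not merely $q_+\ln x+O(1)$. One slip to fix: you include $\zeta$ in the homogeneous equation $-y''+(q_+/x-\zeta)y=0$ \emph{and} use the source $g=\zeta u+f$, which double-counts $\zeta u$; either take the source to be $f$ (then the a priori $L^2$-bound on $u$ is needed only for the Cauchy data at $x_0$), or drop $\zeta$ from the homogeneous operator and keep $g$, as the paper effectively does with $f_\zeta=\zeta u+f$. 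With either correction all your estimates go through unchanged, since $\|g\|\leq C\|f\|$.
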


\begin{proof}
We will prove \eqref{ULnEst}--\eqref{UBEsts} on the positive half-line only. For the case $x\to -0$ the proof is similar. In view of \eqref{QnearOrigin}, we have
\begin{equation}\label{EqnUright}
  u''=\frac{q_+}{x}\,u-\zeta u-f
\end{equation}
for $x\in(0,a)$.
Temporarily write $f_\zeta=\zeta u+f$. Consequently
\begin{align}\label{UPxRepr}
&u'(x)=-q_+\int_x^a  \frac{u(s)}{s}\,ds+\int_x^a f_\zeta(s)\,ds+ u'(a),\\\label{UxRepr}
&u(x)=q_+\int_x^a\frac{s-x}{s}\,u(s)\,ds-\int_x^a(s-x)f_\zeta(s)\,ds+u(a)+u'(a)(x-a).
\end{align}
From this  we see in particular that there exists the finite limit value
\begin{equation}\label{U0Repr}
  u(+0)=q_+\int_0^a u(s)\,ds+x\int_0^a f_\zeta(s)\,ds+u(a)-au'(a)
\end{equation}
not only for an element of $\dmn \cH$, but for any $L^2(\Real_+)$-solution  of \eqref{EqnUright}. In fact, the most singular (as $x\to+0$) integral
\begin{equation*}
  \int_x^a\frac{s-x}{s}\, u(s)\,ds
\end{equation*}
converges to $\int_0^au(s)\,ds$ by  Lebesgue's dominated convergence theorem, because
\begin{equation*}
  \left|\frac{s-x}{s}\,\chi_{(x,a)}(s)\right|\leq 1\qquad\text{for \ }s\in(0,a).
\end{equation*}
Here $\chi_{(x,a)}$ is the characteristic function of interval $(x,a)$.
Combining \eqref{U0Repr} and the second inequality in \eqref{EstU}, we discover
\begin{equation}\label{UEstC0}
   \|u\|_{C^0([0,a])}\leq c\|f\|.
\end{equation}
Subtracting \eqref{U0Repr} from \eqref{UxRepr}, we can represent the difference as
\begin{multline*}
  u(x)-u(+0)=-q_+x\int_x^a  \frac{u(s)}{s}\,ds\\
  +\int_0^x \big(s f_\zeta(s)-q_+u(s)\big)\,ds+ x\int_x^a f_\zeta(s)\,ds+u'(a)x.
\end{multline*}
Since
\begin{equation}\label{IntU/sds}
  \int_x^a  \frac{u(s)}{s}\,ds=u(+0)\,x(\ln a-\ln x)+\int_x^a  \frac{u(s)-u(+0)}{s}\,ds,
\end{equation}
we finally have
\begin{multline}\label{Ux-U0Final}
  u(x)-u(+0)=q_+u(+0)\,x(\ln x-\ln a)
  +\int_0^x \big(s f_\zeta(s)-q_+u(s)\big)\,ds\\+ x\int_x^a f_\zeta(s)\,ds+u'(a)x-
  q_+x\int_x^a  \frac{u(s)-u(+0)}{s}\,ds.
\end{multline}
Hence
\begin{equation*}
  |u(x)-u(+0)|\leq c_1\|f\|\,x|\ln x|+ |q_+|\,x\int_x^a\frac{|u(s)-u(+0)|}{s}\,ds,
\end{equation*}
where we employed \eqref{EstU} and \eqref{UEstC0} to obtain the estimates
\begin{align*}
  |u(+0)|&+|u'(a)|\leq c_{2}\,\|f\|,\qquad \|f_\zeta\|=\|\zeta u+f\|\leq c_3\,\|f\|,\\
  &\left|\int_0^x u(s)\,ds\right|\leq x\sup_{s\in(0,x)}|u(s)|\leq c_4x\|f\|,\\
  &\left|\int_0^x s f_\zeta(s)\,ds\right|\leq x\int_0^x |f_\zeta(s)|\,ds\leq x^{3/2}\|f\|.
\end{align*}
Consequently  Gronwall's inequality implies
\begin{equation}\label{ULnGrunw}
  |u(x)-u(+0)|  \leq c_1\|f\|\,e^{|q_+|\, x(\ln a-\ln x)}\,x|\ln{x}|
  \leq C_1\|f\|\,x|\ln{x}|,
\end{equation}
as $x\to +0$, which establishes \eqref{ULnEst}.
Applying \eqref{IntU/sds}  to \eqref{UPxRepr}, we find
\begin{multline*}
   u'(x)=
   q_+u(+0)(\ln x-\ln a)
   -q_+\int_x^a\frac{u(s)-u(+0)}{s}\,ds\\+
   \int_x^a f_\zeta(s)\,ds+u'(a)
   =q_+u(+0)\ln x+b_+(u)+r(x,u),
\end{multline*}
where
\begin{align*}
  &b_+(u)=
    u'(a)-q_+u(+0)\ln a+\int_0^a f_\zeta(s)\,ds-q_+\int_0^a\frac{u(s)-u(+0)}{s}\,ds,\\
   &r(x,u)=q_+\int_0^x\frac{u(s)-u(+0)}{s}\,ds-
   \int_0^x f_\zeta(s)\,ds.
\end{align*}
Thus formulas \eqref{EstU}, \eqref{ULnEst} and \eqref{UEstC0} provide the bounds
\begin{align*}
&\begin{aligned}
 |r(x,u)|&\leq |q_+| \int_0^x\frac{|u(s)-u(+0)|}{s}\,ds+
   |\zeta|\int_0^x |u|\,ds+\int_0^x |f|\,ds
   \\
   &\leq C_1\,\|f\| \int_0^x |\ln s|\,ds+c_4(\|u\|+\|f\|)x^{1/2}
   \leq C_2\|f\| x^{1/2},
\end{aligned}\\
&\begin{aligned}
 |b_+(u)|&\leq c_5(|u'(a)|+|u(+0)|)+c_6\|f_\zeta\|
 \\
& + |q_+| \int_0^a\frac{|u(s)-u(+0)|}{s}\,ds
    \leq c_7\,\|f\|+ c_8\|f\| \int_0^a |\ln s|\,ds\leq C_3\|f\|,
\end{aligned}
\end{align*}
which establishes \eqref{UpEst} and  \eqref{UBEsts}.
\end{proof}

By construction functions $v_k$ in \eqref{AsymptoticsYepsC1} belong to $W_2^2(\cI)$. We will show that their $W_2^2$-norms can be estimated by the $L^2$-norm of $f$.
\begin{lem}\label{LemmaVkEst}
Assume that $v_0$, $v_1$ and $v_2$ are solunions of \eqref{ProblemV0}, \eqref{ProblemV1} and \eqref{ProblemV2} respectively. Suppose that these solutions are chosen so that  $v_0(-1)=u(-0)$, $v_1(-1)=0$ and $v_2(-1)=0$. Then
\begin{equation}\label{VkEst}
\|v_k\|_{W_2^2(\cI)}\leq C_1\|f\|
\end{equation}
for all $f\in L^2(\Real)$ and $k=0,1,2$, the constant $C_1$ being independent of $f$.

Let $z_\eps$ be the solution of \eqref{CPZeps}. Then $z_\eps$ also belongs to $W_2^2(\cI)$, with the estimate
\begin{equation}\label{ZepsEst}
\|z_\eps\|_{W_2^2(\cI)}\leq C_2\eps^{-1/2}\|f\|,
\end{equation}
where $C_2$ does not depend of $f$ and $\eps$.
\end{lem}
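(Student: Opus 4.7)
The plan is to handle $v_0$, the pair $v_1,v_2$, and $z_\eps$ in turn, each requiring a different ingredient.

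\textbf{Step 1 (the function $v_0$).} In the resonant setting the homogeneous Neumann problem \eqref{ProblemV0} has a one-dimensional solution space spanned by the half-bound state $h_0$, so every solution of \eqref{ProblemV0} is of the form $c h_0$. The side condition $v_0(-1)=u(-0)$ together with $h_0(-1)=1$ forces
\begin{equation*}
  v_0 = u(-0)\, h_0 .
\end{equation*}
Since $h_0\in W_2^2(\cI)$ is a fixed function and $|u(-0)|\le c\|f\|$ by Lemma~\ref{LemmaAsymptU} (this is contained in \eqref{UEstC0} proved for $u(+0)$; the same argument works at the left endpoint), the estimate \eqref{VkEst} for $k=0$ is immediate.

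\textbf{Step 2 (the functions $v_1,v_2$).} These solve inhomogeneous Neumann problems whose homogeneous version has a nontrivial kernel $\{c h_0\}$. Solvability is guaranteed by \eqref{CoupCnd00} and \eqref{CoupCndH1}, which follow from the hypotheses \eqref{Qcond} and the definition of $\mu$, and the general solution has the form $v_k^\ast + c h_0$. The normalization $v_k(-1)=0$ (permissible because $h_0(-1)=1\ne 0$) selects a unique element. To get the quantitative bound I would view the linear map
\begin{equation*}
  L\colon W_2^2(\cI)\to L^2(\cI)\times\Real^3,\qquad L v = \bigl(-v''+Uv,\; v'(-1),\; v'(1),\; v(-1)\bigr)
\end{equation*}
as a Fredholm operator of index $0$; its kernel is trivial (any element in it would be a multiple of $h_0$ vanishing at $-1$, hence zero). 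By the closed graph theorem, $L^{-1}$ is bounded on its range, which yields
\begin{equation*}
  \|v_k\|_{W_2^2(\cI)}\le C\bigl(\|g_k\|_{L^2(\cI)}+|v_k'(-1)|+|v_k'(1)|\bigr),
\end{equation*}
with $g_1=-\kappa v_0$, $g_2=-V v_0$. All right-hand side terms are controlled by $\|f\|$: the $g_k$'s because $\kappa,V\in L^\infty$ and $\|v_0\|_{L^2}\le C\|f\|$; the boundary data $q_\pm u(\pm 0)$ by the $C^0$ bound on $u$; and $b_\pm(u)$ by \eqref{UBEsts} of Lemma~\ref{LemmaAsymptU}.

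\textbf{Step 3 (the function $z_\eps$).} For the Cauchy problem \eqref{CPZeps}, rewriting as a first-order system in $(z,z')$ and applying Gronwall (or using variation of parameters with the fundamental pair of $-y''+Uy=0$, which is bounded on $\cI$) yields
\begin{equation*}
  \|z_\eps\|_{C^1(\cI)}\le C\|f(\eps\cdot)\|_{L^2(\cI)}, \qquad \|z_\eps''\|_{L^2(\cI)}\le C\|f(\eps\cdot)\|_{L^2(\cI)},
\end{equation*}
where the second bound uses the equation itself. A change of variables gives
\begin{equation*}
  \|f(\eps\cdot)\|_{L^2(\cI)}^2 = \frac{1}{\eps}\int_{-\eps}^{\eps}|f(x)|^2\,dx \le \eps^{-1}\|f\|^2,
\end{equation*}
whence \eqref{ZepsEst}.

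\textbf{Main obstacle.} The delicate point is the a priori estimate in Step~2: merely knowing that the Fredholm alternative applies gives existence of $v_k^\ast$, but a quantitative bound requires an abstract argument (closed graph / compactness) or an explicit construction of a bounded right-inverse. The selection $v_k(-1)=0$ is what makes the augmented operator $L$ injective and hence allows us to promote existence into a norm estimate without losing a constant depending on the kernel structure.
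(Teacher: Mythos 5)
Your proposal is correct, and Steps 1 and 3 coincide with the paper's argument ($v_0=u(-0)h_0$ with $|u(-0)|\le c\|f\|$, and the scaling computation $\|f(\eps\cdot)\|_{L^2(\cI)}^2\le \eps^{-1}\|f\|^2$ combined with the standard Cauchy-problem bound). Where you genuinely diverge is Step 2: the paper does not invoke an abstract a priori estimate but instead writes the normalized solutions explicitly, $v_1=u(-0)\,\omega$ and $v_2=b_-(u)\,\omega+u(-0)\,\Omega$, where $\omega$ and $\Omega$ are the ($f$-independent) solutions of the Cauchy problems $-\omega''+U\omega=-\kappa h_0$, $\omega(-1)=0$, $\omega'(-1)=q_-$ and $-\Omega''+U\Omega=-Vh_0$, $\Omega(-1)=\Omega'(-1)=0$; the Neumann conditions at $t=1$ are then verified by integration by parts against $h_0$ together with \eqref{ThetaVcond} and \eqref{CoupCnd1}, and \eqref{VkEst} follows at once because only the scalar coefficients $u(-0)$, $b_\pm(u)$ depend on $f$ and these are bounded by \eqref{EstU} and \eqref{UBEsts}. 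Your route instead augments the operator with the three functionals $v'(-1)$, $v'(1)$, $v(-1)$ and uses injectivity plus closed range to get a bounded inverse on the range; this works and is arguably more robust (it bounds any normalized solution without exhibiting it), at the cost of being less constructive. One small correction: the map $L\colon W_2^2(\cI)\to L^2(\cI)\times\Real^3$ is Fredholm of index $-1$, not $0$ (the unconstrained operator $-\tfrac{d^2}{dt^2}+U$ has index $2$ and you impose three scalar conditions); with trivial kernel its range has codimension one, which is precisely the solvability condition you quote. This does not damage your argument, since only injectivity and closedness of the range are used to obtain the estimate $\|v_k\|_{W_2^2(\cI)}\le C\bigl(\|g_k\|_{L^2(\cI)}+|v_k'(-1)|+|v_k'(1)|\bigr)$.
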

\begin{proof}
  It is evident from \eqref{V0} and Lemma~\ref{LemmaAsymptU} that $\|v_0\|_{W_2^2(\cI)}\leq c|u(-0)|\leq C_1\|f\|$.
To prove this estimate for $v_1$ and $v_2$, we construct below  representations for the desired solutions. Let $\omega $ be a solution of the Cauchy problem
\begin{equation*}
  -\omega''+U\omega =-\kappa h_0,\;\;t\in\cI, \qquad \omega(-1)=0, \quad \omega'(-1)=q_-.
\end{equation*}
We set $v_1=u(-0)\omega$. This function solves the equation in \eqref{ProblemV1} and $v_1'(-1)=q_-u(-0)$. The boundary condition at $t=1$ also holds, because multiplying  the equation for $\omega$  by half-bound state $h_0$ and integrating by parts twice yield
\begin{equation*}
  \theta \omega'(1)=q_-+\int_{\cI}\kappa h_0^2\,dx.
\end{equation*}
From this we have
\begin{equation*}
  v_1'(1)=u(-0)\omega'(1)=\theta^{-1}u(-0)\left(q_-+\int_{\cI}\kappa h_0^2\,dx\right)=q_+\theta u(-0)=q_+u(+0),
\end{equation*}
by \eqref{ThetaVcond}.
Next, solution $v_2$ of \eqref{ProblemV2} can be written as
\begin{equation}\label{V2representation}
  v_2(t)=b_-(u)\omega (t)+u(-0)\Omega (t),
\end{equation}
where $\Omega $ solves the problem
\begin{equation*}
  -\Omega ''+U\Omega =-Vh_0,\;\;t\in\cI, \qquad \Omega (-1)=0, \quad \Omega '(-1)=0.
\end{equation*}
Note that $\Omega '(1)=\theta^{-1}\int_\Real V h_0^2\,dt$. This equality can be obtained by multiplying equation $-y''+Uy=-Vh_0$ by half-bound state $h_0$ and integrating by parts. So we have $v_2(-1)=b_-(u)\omega (-1)+u(-0)\Omega (-1)=0$, $v_2'(-1)=b_-(u)\omega '(-1)+u(-0)\Omega '(-1)=b_-(u)$ and
\begin{equation*}
  v_2'(1)=b_-(u)\omega '(1)+u(-0)\Omega '(1)= \theta^{-1}\left(b_-(u)+u(-0)\int_\cI V h_0^2\,dt\right)=b_+(u)
\end{equation*}
in view of coupling condition \eqref{CoupCnd1}. Hence $v_2$ of the form \eqref{V2representation} is a solution of \eqref{ProblemV2} such that $v_2(-1)=0$.
Estimate \eqref{VkEst} for $k=1,2$  follows from the explicit form of $v_1$, $v_2$,  bounds \eqref{EstU} and Lemma~\ref{LemmaAsymptU}.

Since $U\in L^\infty(\Real)$, solution $z_\eps$ of the Cauchy problem satisfies
$$
	\|z_\eps\|_{W_2^2(\cI)}\leq c_1\|f(\eps\,\cdot)\|_{L^2(\cI)}.
$$
We also have
\begin{equation*}
\int_{-1}^1|f(\eps t)|^2\,dt
\leq c_2\eps^{-1} \int_{-\eps}^\eps|f(\tau)|^2\,d\tau\leq c_3\eps^{-1}\|f\|^2.
\end{equation*}
Therefore \eqref{ZepsEst} follows from  the last bound.
\end{proof}

\begin{lem}\label{LemmaRho}
 Assume that function $\rho_\eps$ is  given by \eqref{CorrectoR}. There exist constants $C_1$ and $C_2$ being independent of $f$ such that
  \begin{gather}\label{supRho}
    \sup_{|x|>\eps}(|\rho_\eps(x)|+|\rho''_\eps(x)|)\leq C_1\eps^{1/2}\|f\|,\\\label{Rho/x}
   \|Q\rho_\eps\|\leq C_2\eps^{1/4}\|f\|.
  \end{gather}
\end{lem}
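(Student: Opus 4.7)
The plan is to propagate bounds on the four jumps $[w_\eps]_{\pm\eps}$ and $[w_\eps']_{\pm\eps}$ through the explicit formula \eqref{CorrectoR} for $\rho_\eps$.

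First I would write the jumps out. From \eqref{AsymptoticsWepsZ} and the matching relations \eqref{CConds} (in particular $v_0(1)=u(+0)$, $v_0'(1)=0$, $v_1'(1)=q_+u(+0)$, $v_2'(1)=b_+(u)$) one obtains
\begin{align*}
[w_\eps]_\eps &= u(\eps)-u(+0)-v_1(1)\eps\ln\eps-v_2(1)\eps-\eps^2 z_\eps(1),\\
[w_\eps']_\eps &= \bigl(u'(\eps)-q_+u(+0)\ln\eps-b_+(u)\bigr)-\eps z_\eps'(1),
\end{align*}
and an analogous expression at $x=-\eps$. The asymptotic refinements \eqref{ULnEst}--\eqref{UpEst} bound the $u$- and $u'$-remainders by $C\|f\|\,\eps|\ln\eps|$ and $C\|f\|\,\eps^{1/2}$ respectively, while Lemma~\ref{LemmaVkEst} combined with the one-dimensional Sobolev embedding $W_2^2(\cI)\hookrightarrow C^1([-1,1])$ gives $|v_k(\pm 1)|\le c\|f\|$ and $|z_\eps(\pm 1)|+|z_\eps'(\pm 1)|\le c\eps^{-1/2}\|f\|$. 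Collecting the terms I expect the two key estimates
\begin{equation*}
|[w_\eps]_{\pm\eps}|\le C\eps|\ln\eps|\,\|f\|, \qquad |[w_\eps']_{\pm\eps}|\le C\eps^{1/2}\|f\|.
\end{equation*}

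The bound \eqref{supRho} then follows at once: since $\xi$ and $\eta$ are fixed smooth cut-offs on $[0,a/2]$, the norms $\|\xi\|_{C^2}$ and $\|\eta\|_{C^2}$ are absolute constants, so \eqref{CorrectoR} gives $\sup_{|x|>\eps}(|\rho_\eps|+|\rho_\eps''|)\le C(|[w_\eps]_{\pm\eps}|+|[w_\eps']_{\pm\eps}|)$, in which the $O(\eps^{1/2}\|f\|)$ contribution from the derivative jumps dominates.

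The principal step, and the main obstacle, is \eqref{Rho/x}. The support of $\rho_\eps$ lies in $\{\eps<|x|<\eps+a/2\}$, where $|Q(x)|\le c/|x|$, so the integral of $|Q|^2$ is $O(1/\eps)$ and a crude bound on $\rho_\eps$ only yields $O(1)$. The remedy is to split $\rho_\eps$ into its $\xi$-piece and $\eta$-piece. For the $\xi$-piece I use $|\xi|\le c$ to get $\int_\eps^{\eps+a/2}x^{-2}\xi^2(x-\eps)\,dx\le c/\eps$; the dangerous $1/\eps$ is then rescued by the logarithmic smallness $|[w_\eps]_{\pm\eps}|^2\le C\eps^2\ln^2\eps\,\|f\|^2$, leaving a contribution of order $\eps\ln^2\eps\,\|f\|^2$. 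For the $\eta$-piece the key observation is $\eta(+0)=0$, so $|\eta(x-\eps)|\le c(x-\eps)$ on $[\eps,\eps+a/2]$; this cancels the Coulomb singularity and makes $\int x^{-2}\eta^2(x-\eps)\,dx$ uniformly bounded, yielding $O(\eps\|f\|^2)$. Summing the two pieces gives $\|Q\rho_\eps\|^2\le C\eps\ln^2\eps\,\|f\|^2$, and since $\eps^{1/2}|\ln\eps|=o(\eps^{1/4})$ as $\eps\to 0$, the estimate \eqref{Rho/x} follows. Without the logarithmic smallness of the value-jump --- which itself encodes the matching $v_0(\pm 1)=u(\pm 0)$ built into the ansatz --- this line of argument would only deliver an $O(1)$ bound.
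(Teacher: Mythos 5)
Your proof is correct, and while the first half (the jump estimates and \eqref{supRho}) coincides with the paper's argument, your treatment of the key bound \eqref{Rho/x} takes a genuinely different route. The paper keeps $\rho_\eps$ whole and instead splits the \emph{integration domain} at $|x|=\eps^{\gamma}$: near the origin it uses an auxiliary sup-estimate $\sup_{\eps<|x|<\eps^\gamma}|\rho_\eps|\leq c\,\eps^{\gamma+1/2}\|f\|$ (which already exploits $|\eta(x)|\leq c|x|$, but only on the inner region), pays the factor $\int x^{-2}dx\sim\eps^{-1}$ there, bounds the outer region by $\eps^{-2\gamma}\cdot\eps\|f\|^2$, and then optimizes $\gamma=1/4$ to balance $\eps^{2\gamma}+\eps^{1-2\gamma}$, which is exactly where the exponent $1/4$ in \eqref{Rho/x} comes from. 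You instead split the \emph{corrector} into its $\xi$- and $\eta$-pieces over the whole support: the $\eta$-piece is tamed globally by $|\eta(x-\eps)|\leq c(x-\eps)\leq cx$, which cancels $x^{-2}$ outright, and the $\xi$-piece is rescued by the logarithmic smallness $|[w_\eps]_{\pm\eps}|\leq C\eps|\ln\eps|\,\|f\|$ of the value jumps against the $O(\eps^{-1})$ integral of $Q^2$. This avoids the free parameter $\gamma$ altogether and in fact delivers the sharper bound $\|Q\rho_\eps\|\leq C\eps^{1/2}|\ln\eps|\,\|f\|$, from which \eqref{Rho/x} follows since $\eps^{1/2}|\ln\eps|=o(\eps^{1/4})$; the paper's rate $\eps^{1/4}$ in \eqref{ResolventDiff} is anyway capped by other terms in the final estimate, so nothing downstream changes, but your argument shows the corrector term is not the bottleneck at this exponent.
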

\begin{proof}
To prove \eqref{supRho} it suffices to show
\begin{equation*}
  \big|[w_\eps]_{-\eps} \big|+ \big|[w_\eps]_{\eps} \big|+|[w'_\eps]_{-\eps} \big|+ \big|[w'_\eps]_{\eps} \big|\leq c\eps^{1/2}\|f\|,
\end{equation*}
since functions $\xi$ and $\eta$ in \eqref{CorrectoR} are smooth and bounded together with all their derivatives, if  $|x|>\eps$.
Combining Lemmas~\ref{LemmaAsymptU}, \ref{LemmaVkEst} and the continuity  of embedding $W_2^2(\cI)\subset C^1(\cI)$, we conclude that
\begin{align*}
  &\big|[w_\eps]_{-\eps} \big|= |v_0(-1)-u(-\eps)|=|u(-0)-u(-\eps)|\leq c_1\|f\|\, \eps |\ln \eps|,\\
  &\big|[w'_{\eps}]_{-\eps} \big|=|u'(-\eps)-q_-u(-0)\ln\eps- b_-(u)|\leq c_2\|f\|\, \eps^{1/2},\\
   &\big|[w_\eps]_{\eps} \big|=|u(\eps)-u(+0)-v_1(1)\,\eps\ln\eps- v_2(1)\,\eps-z_\eps(\eps)\,\eps^2|\leq c_3\|f\|\, \eps |\ln \eps|,\\
    &\big|[w'_\eps]_{\eps} \big|=|u'(\eps)-q_+u(+0)\ln\eps- b_+(u)-z'_\eps(\eps)\,\eps|\leq c_4\|f\|\, \eps^{1/2},
\end{align*}
which establishes \eqref{supRho}.

Next, let us fix $\gamma\in (0,\frac12)$.   Since $|\eta(x)|\leq c|x|$ as $|x|\to 0$, we have  \begin{multline}\label{EstRhoNear0}
  \sup_{\eps<|x|<\eps^\gamma}|\rho_\eps(x)|\leq c_5
  \big(|[w_\eps]_{-\eps}|+|[w_\eps]_{\eps}|
  +(|[w'_{\eps}]_{-\eps}|+|[w'_\eps]_{\eps}|)\,\eps^{\gamma}\big)
  \\
\leq c_6(\eps|\ln\eps| + \eps^{\gamma+1/2})\|f\|\leq c_7\eps^{\gamma+1/2}\|f\|
\end{multline}
for $\eps<|x|<\eps^\gamma$. Recall that
$\rho_\eps(x)=0$ for $|x|<\eps$ and  $|x|>a$, provided  $\eps$ is small enough. Then utilizing estimates \eqref{supRho} and \eqref{EstRhoNear0},  we obtain the bound
\begin{align*}\allowdisplaybreaks
  \|Q\rho_\eps\|^2&= \int\limits_{\eps<|x|<a}\kern-4pt Q^2|\rho_\eps|^2\,dx
  \leq
  \max\{|q_-|,|q_+|\}\kern-3pt\int\limits_{\eps<|x|<a}\kern-4pt x^{-2}|\rho_\eps|^2\,dx
  \\\allowdisplaybreaks
  &\leq c_8\left(\int\limits_{\eps<|x|<\eps^{\gamma}}\kern-4pt x^{-2}|\rho_\eps|^2\,dx
  +\kern-3pt\int\limits_{\eps^{\gamma}<|x|<a}\kern-4pt x^{-2}|\rho_\eps|^2\,dx\right)
  \\\allowdisplaybreaks
   &\leq c_8\sup_{\eps<|x|<\eps^\gamma}|\rho_\eps(x)|^2
   \kern-3pt\int\limits_{\eps<|x|<\eps^{\gamma}}\kern-6pt x^{-2}\,dx
  +c_8\sup_{\eps^\gamma<|x|<a} x^{-2}|\rho_\eps(x)|^2
 \kern-3pt \int\limits_{\eps^{\gamma}<|x|<a}\kern-6pt dx
  \\
  &\leq c_9 \eps^{2\gamma+1}\|f\| \kern-3pt \int\limits_{\eps<|x|<\eps^{\gamma}}\kern-6pt x^{-2}\,dx+c_{10}\eps^{1-2\gamma}\|f\|
  \leq c_{11}( \eps^{2\gamma}+\eps^{1-2\gamma})\|f\|.
\end{align*}
Assertion \eqref{Rho/x} follows from this inequality, provided $\gamma=1/4$.
\end{proof}

\subsection{End of the proof}
We showed above that $u_\eps=w_\eps+\rho_\eps$ belongs to the domain of $H_\eps$. We will now prove that $u_\eps$ solves the equation
\begin{equation}\label{HepsU=Fge}
(H_\eps-\zeta)u_\eps=f+g_\eps,
\end{equation}
in which remainder term $g_\eps$ is small in $L_2$-norm uniformly with respect to $f$. Let us compute  $g_\eps$. If $|x|>\eps$, then we have
\begin{equation*}
  g_\eps(x)=\big( -\tfrac{d^2}{dx^2}+Q(x)-\zeta\big)\big(u(x)
  +\rho_\eps(x)\big)-f(x)=-\rho_\eps''(x) +(Q(x)-\zeta)\rho_\eps(x),
\end{equation*}
by \eqref{ProblemUResonance}. If $|x|<\eps$, then
\begin{align*}
   g_\eps(x)  &=
     -\frac{d^2}{dx^2}\,u_\eps\xe+\big(\eps^{-2}U\xe+
     \eps^{-1}\ln\eps\, \kappa\xe
    +\eps^{-1}V\xe-\zeta\big) u_\eps\xe-f(x)\\
          &= \eps^{-2} \big(-v_0''\xe+U\xe v_0\xe\big)
    \\
       &+\eps^{-1}\ln\eps \,\big(-v_1''\xe+U\xe v_1\xe+\kappa\xe v_0\xe\big)
    \\
     &  +\eps^{-1}\big(-v_2''\xe+ U\xe v_2+V\xe v_0\xe\big)
    \\
     &-z''_\eps\xe+U\xe z_\eps\xe-f(x)
     \\
     &+\ln\eps \:\kappa\xe\big(v_1\xe\ln\eps+ v_2\xe+\eps z_\eps\xe\big)\\
     &+V\xe\big(v_1\xe\ln\eps+v_2\xe+\eps z_\eps\xe\big)
     -\zeta u_\eps\xe
     \\
&=\big( \kappa\xe\ln\eps+V\xe\big)\big(v_1\xe\ln\eps+v_2\xe+  z_\eps\xe\eps\big)-\zeta u_\eps\xe
\end{align*}
by \eqref{ProblemV0}--\eqref{ProblemV2} and \eqref{CPZeps}. Hence we have
\begin{multline*}
 \|g_\eps\|\leq \|\rho_\eps''+\zeta\rho_\eps\|+\|Q\rho_\eps\|
 +\sup_{t\in \cI}\big(|U(t)||\ln\eps|+|V(t)|\big)\\
 \times\|v_1\xep\ln\eps+v_2\xep+\eps z_\eps\xep\|_{L^2(-\eps,\eps)}
 +|\zeta|\,\|u_\eps\xep\|_{L^2(-\eps,\eps)}\\
 \leq c_1(\eps^{1/2}+\eps^{1/4})\|f\|+c_2 \eps^{1/2} |\ln\eps| \|v_1\ln\eps+v_2+\eps z_\eps\|_{L^2(\cI)}\\
 +|\zeta|\eps^{1/2}\,\|u_\eps\|_{L^2(\cI)}
 \leq c\eps^{1/4}\|f\|
\end{multline*}
in view of Lemmas~\ref{LemmaVkEst} and  \ref{LemmaRho}.  Here we also used inequality
\begin{equation*}
  \int_{-\eps}^\eps|m\xe|^2\,dx\leq \eps\int_{-1}^1|m(t)|^2\,dt= \eps\,\|m\|^2_{L^2(\cI)}
\end{equation*}
for any $m\in L^2(\cI)$. Therefore \eqref{HepsU=Fge} implies
\begin{equation}\label{estYeps-Ueps}
   \|u_\eps-y_\eps\|=\|(H_\eps-\zeta)^{-1}g_\eps\|\leq |\zeta|^{-1}\|g_\eps\|\leq c\eps^{1/4}\|f\|.
\end{equation}
Now let us consider the difference
\begin{equation*}
 u_\eps(x)-u(x)=
  \begin{cases}
      \rho_\eps(x)   & \text{if }|x|>\eps,\\
      v_0\xe+v_1\xe\eps\ln\eps+ v_2\xe\eps+z_\eps\xe\eps^2-u(x)& \text{if }  |x|<\eps.
  \end{cases}
\end{equation*}
We can as before invoke bound \eqref{EstU}, Lemmas~\ref{LemmaVkEst} and \ref{LemmaRho} to derive
\begin{multline}\label{estUeps-U}
   \|u_\eps-u\|\leq \|\rho_\eps\|+\eps^{1/2}\| v_0+v_1\eps\ln\eps+ v_2\eps+z_\eps\eps^2\|_{L^2(\cI)}
   \\+\|u\|_{L_2(-\eps,\eps)}
   \leq c_1\eps^{1/2}(\|f\|+\max_{|x|\leq \eps}|u(x)|)\leq c_2\eps^{1/2}\|f\|.
\end{multline}
Recalling the definitions of $y_\eps$ and $u$, we estimate
\begin{multline*}
    \|(H_\eps-\zeta)^{-1}f-(\mathcal{H}-\zeta)^{-1}f\|=\|y_\eps-u\|
    \leq\|y_\eps-u_\eps\|+ \|u_\eps-u\|
      \leq C \eps^{1/4}\|f\|,
\end{multline*}
by \eqref{estYeps-Ueps} and \eqref{estUeps-U}. The last bound establishes
the norm resolvent convergence of  $H_\eps$ to the operator $\mathcal{H}$ and estimate \eqref{ResolventDiff}, which is the desired conclusion for the case when potential $V$ is resonant.

If $V$ is not resonant, function $u$ in asymptotics \eqref{AsymptoticsYepsC1} solves problem \eqref{EqnUnonresonance}. Since both the value $u(-0)$ and $u(+0)$ are equal zero, $u'$ has no logarithmic singularity at the origin in view of Lemma~\ref{LemmaAsymptU}. From this reason the uniform approximation to $y_\eps$ has the form
\begin{equation*}
  u_\eps(x)=
  \begin{cases}
      u(x)+\rho_\eps(x), & \text{if }|x|>\eps,\\
       \eps v_2\xe+\eps^2z_\eps\xe, & \text{if }  |x|<\eps,
 \end{cases}
\end{equation*}
where $v_2$ solves the problem
\begin{equation*}
    -v_2''+Uv_2=0, \quad t\in\cI,\qquad
    v_2'(-1)=-u'(-0), \quad v_2'(1)=u'(+0).
\end{equation*}
The rest of the proof is similar to the proof for the previous case.

\end{document}